\documentclass[a4paper]{amsart}

\usepackage{enumerate}
\usepackage{amssymb}
\usepackage{amsmath}
\allowdisplaybreaks[4]
\usepackage{mathrsfs}
\usepackage{amsthm}
\usepackage{mathtools}
\usepackage{tikz}
\usepackage[colorlinks,linkcolor=blue]{hyperref}
\usepackage{comment}
\theoremstyle{definition}
\newtheorem{theorem}{Theorem}[section]

\newtheorem{lemma}[theorem]{Lemma}

\newtheorem{proposition}[theorem]{Proposition}

\theoremstyle{remark}
\newtheorem{remark}[theorem]{Remark}

\numberwithin{equation}{section}

\begin{document}
	\title{ A Llarull type theorem on complete non-compact manifolds}
	\author{Guangrui Zhu}
    \address{School of Mathematical Sciences, East China Normal University, Shanghai, 200241, P. R. China}
\email{52275500052@stu.ecnu.edu.cn}
	\date{}
	\begin{abstract}
Let $3\leq n\leq7$, $2\leq k\leq n-1$, and $m=n-k-1$.
We prove that a complete, connected, noncompact spin manifold $(M^n,g)$ with scalar curvature $R_M\geq k(k-1)$ is isometric to
$\mathbb{S}^k\times\mathbb{T}^m_\Lambda\times\mathbb{R}$ if it admits a smooth proper map of nonzero degree to $\mathbb{S}^k\times\mathbb{T}^m\times\mathbb{R}$ whose spherical component is 1-Lipschitz. The flat torus in the conclusion is not necessarily isometric to the target torus.	
	\end{abstract}
    \subjclass[2020]{53C21, 53C24, 53C27}  
\keywords{Scalar curvature, Degree, Warped $\mu$-bubble, Splitting theorem.}

	\maketitle
	\section{Introduction}
Scalar curvature rigidity asks whether a sharp lower bound for scalar
curvature, together with a suitable metric comparison map, determines the underlying Riemannian metric.  A fundamental result in this direction is Llarull's theorem \cite{Llarull}: if a closed spin manifold carries a 1-Lipschitz map of non-zero degree to the unit round sphere and its scalar curvature is bounded below by the scalar curvature of that sphere, then the map is an isometry. Thus, any nontrivial enlargement of the round metric in the sense that $g\gneqq g_{\mathrm{rd}}$, forces the scalar curvature to fall below the spherical threshold somewhere. The rigidity conclusion remains valid under a suitable weighted scalar curvature condition\cite{C-W,chow2025,Deng,Zhou-Zhu}. We record it here for later use.
 \begin{theorem}\label{Weighted-Llarull}
  \textit{ Let $(N^n,g)$ be a closed, connected, spin Riemannian manifold of dimension $n\ge2$. Suppose that $f:(N,g)\to(\mathbb{S}^{n},g_{\mathbb{S}^{k}})$is a
    smooth $1$-Lipschitz map with non-zero degree. If there is a smooth function $\psi$ on $N$ such that 
     \[-2\Delta_N\psi-|\nabla_N\psi|^2+R_N-n(n-1)\ge0,\]
then $(N,g)$ is isometric to
$(\mathbb{S}^{n},\,g_{\mathbb{S}^{n}})$ and $\psi$ is a constant.}
    \end{theorem}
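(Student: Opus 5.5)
The plan is to run Llarull's spinorial argument on a Dirac operator twisted by $f^*S\mathbb{S}^n$ and modified by the weight $\psi$. Let $E=f^*S_{\mathbb{S}^n}$ be the pullback of the spinor bundle of the round sphere, or of $T\mathbb{S}^n\otimes\mathbb{C}$ reduced to a spinor bundle. When $n$ is odd, first replace $N$ by $N\times S^1_r$ with $r$ large, or by the suspension trick. This reduces to the even-dimensional case, where the index of the twisted Dirac operator $D_E$ on $S_N\otimes E$ equals a nonzero multiple of $\deg f\cdot\chi(\mathbb{S}^n)$. Hence $\deg f\neq0$ gives a nontrivial harmonic spinor. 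To bring in $\psi$, conjugate $D_E$ by $e^{\psi/2}$, or equivalently use the weighted Dirac operator $D_\psi=D_E-\tfrac12 c(\nabla\psi)$. Its kernel has the same dimension as that of $D_E$ up to a conformal or gauge change, so we still obtain $\sigma\neq0$ with $D_\psi\sigma=0$.

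Next, derive the weighted Schrödinger–Lichnerowicz identity. Integrate $|D_\psi\sigma|^2$ against the weight $e^{-\psi}$, or use the modified connection $\widetilde\nabla_X=\nabla_X+\tfrac12 X(\psi)$-type corrections as in \cite{Zhou-Zhu,chow2025}. This should give
\[
0\ge\int_N\Big(|\widetilde\nabla\sigma|^2+\tfrac14\big(-2\Delta\psi-|\nabla\psi|^2+R_N\big)|\sigma|^2+\langle\mathcal R^E\sigma,\sigma\rangle\Big)\cdot(\text{weight})
\]
after an integration by parts and a Cauchy–Schwarz absorption. The exact combination $-2\Delta\psi-|\nabla\psi|^2$ is precisely what appears when one writes $R_N$ in terms of the conformally related metric or the drift Laplacian. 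Llarull's pointwise estimate, using that $f$ is $1$-Lipschitz, gives $\langle\mathcal R^E\sigma,\sigma\rangle\ge-\tfrac{n(n-1)}4|\sigma|^2$. Together with the hypothesis, the integrand is therefore nonnegative.

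For rigidity, all inequalities must be equalities. Thus $\widetilde\nabla\sigma=0$, so $|\sigma|$ is controlled and, by parallelism, $\sigma$ vanishes nowhere. The curvature term equality forces $-2\Delta\psi-|\nabla\psi|^2+R_N=n(n-1)$ and saturation in Llarull's estimate. As in Llarull, saturation forces $df$ to be an isometry at each point, since every singular value of $df$ equals $1$ where $\sigma\neq0$, and hence everywhere. So $f$ is a local isometry of closed manifolds with nonzero degree, hence a covering map onto the simply connected sphere, hence an isometry. Then $R_N=n(n-1)$, so $-2\Delta\psi-|\nabla\psi|^2=0$. Multiplying by $e^{\psi/2}$ gives $\Delta e^{\psi/2}=0$ on a closed manifold, so $\psi$ is constant. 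The case $n=2$ can be handled separately by Gauss–Bonnet with the weight.

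The main obstacle is setting up the weighted Bochner formula so that the $\psi$-terms come out exactly as $-2\Delta\psi-|\nabla\psi|^2$ without sign loss. The first thing to try is the identity $|D\phi|^2$-based computation for $\phi=e^{-\psi/2}\sigma$ with a twisted connection, with the cross terms absorbed by completing the square. A second, more delicate point is the odd-dimensional reduction, since the product with a circle must preserve the weighted inequality and the rigidity statement. This is handled by taking $\psi$ to be independent of the circle factor.
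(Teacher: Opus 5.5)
\textbf{The gap is the odd-dimensional case.} Your even-dimensional argument is sound, and it is exact rather than heuristic. With $\widetilde\nabla_X\sigma=\nabla_X\sigma-\tfrac12X(\psi)\sigma=e^{\psi/2}\nabla_X(e^{-\psi/2}\sigma)$, one integration by parts gives
\[
\int_N|\nabla\sigma|^2=\int_N|\widetilde\nabla\sigma|^2+\tfrac14\bigl(-2\Delta\psi-|\nabla\psi|^2\bigr)|\sigma|^2 .
\]
So no Cauchy--Schwarz absorption is needed. Together with Lichnerowicz and Llarull's estimate you get $0\ge\int_N|\widetilde\nabla\sigma|^2+\tfrac14(\mathcal R(\psi)-n(n-1))|\sigma|^2$, where $\mathcal R(\psi)=R_N-2\Delta\psi-|\nabla\psi|^2$. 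Your equality analysis then goes through: $e^{-\psi/2}\sigma$ is parallel, the curvature estimate is saturated, all singular values of $df$ equal $1$, and $\psi$ is constant.

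The circle trick does not reduce the odd-dimensional rigidity statement to the even case. On $N\times S^1_r$ the composite $F_r:N\times S^1_r\to\mathbb{S}^n\times S^1_r\to\mathbb{S}^{n+1}$ has singular values $\sin\theta\cdot\mu_i$, where $\mu_i\le1$ are those of $df$, together with $\theta'=O(1/r)$ in the circle direction. So Llarull's estimate only gives $\langle\mathcal R^E\sigma,\sigma\rangle\ge-\tfrac14\bigl(n(n-1)+C/r\bigr)|\sigma|^2$, while $R_{N\times S^1_r}=R_N$. Under the non-strict hypothesis $\mathcal R(\psi)\ge n(n-1)$, the potential may be negative of order $1/r$. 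The integral inequality then yields neither a contradiction nor an equality case: you cannot conclude $\widetilde\nabla\sigma=0$ or saturation on $N$.

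Taking $\psi$ independent of the circle is harmless, but it is not the issue. A suspension is not a manifold, so that alternative does not help either. The circle trick proves only the strict obstruction, where compactness gives a uniform gap $\delta$ that absorbs $C/r$.

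You need an extra step that turns equality into a strict statement. One way:
\begin{enumerate}
\item Run the circle argument with the pointwise quantity $\Phi=\sum_{i\ne j}\mu_i\mu_j\le n(n-1)$. This shows there is no $\psi'$ with $\mathcal R(\psi')\ge\Phi+\delta$.
\item Put $u=e^{\psi/2}$ and $L=-\Delta+\tfrac14(R_N-\Phi)$. Then $Lu=\tfrac14(\mathcal R(\psi)-\Phi)u\ge0$.
\item If $Lu\not\equiv0$, then $\lambda_1(L)>0$, and $\psi'=2\log\varphi_1$ built from the first eigenfunction violates the obstruction. Here $\Phi$ is only continuous, so replace it by a smooth $\tilde\Phi$ with $\Phi\le\tilde\Phi\le n(n-1)$, keeping $\tilde\Phi<n(n-1)$ wherever you need strictness.
\item Hence $\Phi\equiv n(n-1)$ and $\mathcal R(\psi)\equiv n(n-1)$, and your final steps apply.
\end{enumerate}
Alternatively, take the paper's route in Appendix~\ref{app-remove-spin}. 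With $\widehat g=e^{2\psi/(n-1)}g$,
\[
R_{\widehat g}-n(n-1)\|df\|_{\widehat g}^2=e^{-2\psi/(n-1)}\Bigl(\mathcal R(\psi)-n(n-1)\|df\|_g^2+\tfrac{|\nabla\psi|^2}{n-1}\Bigr)\ge0 .
\]
This reduces the weighted statement to the unweighted Llarull theorem with pointwise dilation in all dimensions, whose odd-dimensional rigidity is quoted from the literature. Equality there forces $\nabla\psi=0$.
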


The unit sphere $\mathbb{S}^k$ carries the positive scalar curvature
$k(k-1)$, whereas a flat torus contributes no positive scalar curvature. More intrinsically, the torus is enlargeable and admits no metric of positive scalar curvature. Thus $\mathbb S^k\times \mathbb T^m$ is the simplest closed model in which all of the positive scalar curvature comes from the spherical factor, while the torus records scalar flat directions constrained by topology. In the closed setting, rigidity results for such products have been obtained by minimal hypersurface and Dirac operator methods in \cite{chow2025,HSS,Zhou-Zhu}. In particular, Chow proved a weighted scalar curvature rigidity result as follows.
\begin{theorem}(\cite[Theorem A]{chow2025})\label{Chow}
\textit{
For $3 \leq n \leq 7$ and $2 \leq k \leq n-1$, let $(N^n,g)$ be a closed, connected, spin Riemannian manifold. Suppose that 
\[
A\coloneqq(A_\mathbb{S},A_\mathbb{T}):(N,g) \longrightarrow(\mathbb{S}^{k}\times\mathbb{T}^{n-k},\,g_{\mathbb{S}^{k}}+g_{\mathbb{T}^{n-k}})
\]
is a smooth map of non-zero degree such that
$A_\mathbb{S}:(N,g)\to(\mathbb{S}^{k},g_{\mathbb{S}^{k}})$is
     $1$-Lipschitz. If there exists a smooth function $\psi$ on $N$ such that 
     \[
     -2\Delta_N\psi-|\nabla_N\psi|^2+R_N-k(k-1)\ge0,
     \]
then $(N^n,g)$ is isometrically covered by
$(\mathbb{S}^{k}\times\mathbb{R}^{n-k},\,g_{\mathbb{S}^{k}}+g_{\mathbb{R}^{n-k}})$ and $\psi$ is a constant.
}
    \end{theorem}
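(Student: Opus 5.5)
The plan is to reduce the torus directions one at a time by warped minimal hypersurfaces (stable weighted minimal hypersurfaces) until we reach a closed manifold mapping to $\mathbb{S}^k$ with nonzero degree. Theorem \ref{Weighted-Llarull} then supplies rigidity on that slice, and we propagate the rigidity back up through the tower of slices.

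\textbf{Step 1: the slicing tower.} Write $A_\mathbb{T}=(\theta_1,\dots,\theta_{n-k})$ with $\theta_i:N\to\mathbb{S}^1$. Since $\deg A\neq0$, the homology class $\Sigma_0=N$ admits the following iteration. Set $\psi_0=\psi$. Given $\Sigma_j$ of dimension $n-j$ carrying a weight $\rho_j=e^{\psi_j}$, minimize the weighted area $\int_{\Sigma}\rho_j\,d\mathcal H^{n-j-1}$ in the homology class Poincar\'e dual to $\theta_{j+1}^*[d\theta]$ restricted to $\Sigma_j$.
\begin{itemize}
\item Regularity holds because $n\le7$.
\item Each component with nonzero degree gives $\Sigma_{j+1}$.
\item The restricted map $(A_\mathbb{S},\theta_{j+2},\dots)$ still has nonzero degree on $\Sigma_{j+1}$.
\end{itemize}

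\textbf{Step 2: weighted scalar curvature inequality on each slice.} On $\Sigma_{j+1}$, stability gives a positive first eigenfunction $u$ of the weighted Jacobi operator. The standard computation (as in Schoen--Yau and Chow's warped version) shows that $\psi_{j+1}=\psi_j|_{\Sigma_{j+1}}+\log u$ satisfies
\[
-2\Delta\psi_{j+1}-|\nabla\psi_{j+1}|^2+R_{\Sigma_{j+1}}-k(k-1)\ \ge\ |A|^2+(\text{nonnegative terms}).
\]
The key point is that the coefficient structure $-2\Delta-|\nabla|^2$ is preserved under restriction plus a $\log u$ shift. This uses the same completion-of-squares as in the closed case, together with the inequality $\langle\nabla\psi,\nu\rangle^2$ absorption.

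\textbf{Step 3: apply Theorem \ref{Weighted-Llarull} on the final slice.} After $n-k$ steps, $\Sigma_{n-k}$ is closed, of dimension $k$, and maps to $\mathbb{S}^k$ by a 1-Lipschitz map of nonzero degree. Theorem \ref{Weighted-Llarull} therefore applies and shows that $\Sigma_{n-k}$ is the round sphere with $\psi_{n-k}$ constant.

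\textbf{Step 4: back-propagation of rigidity.} Equality in Step 3 forces every inequality in Step 2 to be an equality at each level. Concretely, each slice is totally geodesic, $u$ is constant, $\partial_\nu\psi=0$, and $\mathrm{Ric}(\nu,\nu)=0$ along the slice. From this one concludes that $\psi$ is constant. A foliation argument near each slice — perturbing to nearby weighted-minimal leaves via the implicit function theorem, each again rigid — then shows that $\Sigma_j$ locally splits isometrically as $\Sigma_{j+1}\times(-\epsilon,\epsilon)$. By connectedness and a continuation argument this extends to a global splitting, so the universal cover of $N$ is $\mathbb{S}^k\times\mathbb{R}^{n-k}$.

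\textbf{Main obstacle.} The hardest part is Step 4: passing from the rigidity of a single bottom slice to a global splitting across all $n-k$ levels while the weight $\psi$ is still present. I would first establish $\psi\equiv$ const at the top level via the equality case. This turns the problem into the unweighted case, and then the standard CMC/minimal foliation argument applies level by level.
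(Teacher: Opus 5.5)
Your Steps 1--3 are fine. The update $\psi_{j+1}=\psi_j|_{\Sigma_{j+1}}+\log u_{j+1}$ is exactly the computation in the proof of Proposition \ref{prop-noncompact-obstruction} with $\widehat h\equiv0$. The slices are two-sided, hence spin, so Theorem \ref{Weighted-Llarull} applies to $\Sigma_{n-k}$.

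The gap is in Step 4, starting with the claim that equality propagates to every level. Write $\mathcal R_\Sigma(\varphi)=R_\Sigma-2\Delta_\Sigma\varphi-|\nabla_\Sigma\varphi|^2$, and let $\lambda_j\ge0$, $u_j>0$ be the first eigenvalue and eigenfunction of the weighted Jacobi operator of $\Sigma_j\subset\Sigma_{j-1}$. The stability inequalities telescope only at points of the bottom slice:
\[
\mathcal R_{\Sigma_{n-k}}(\psi_{n-k})-k(k-1)=\Big(\mathcal R_N(\psi)-k(k-1)+\sum_{j=1}^{n-k}\big(2\lambda_j+|A_{\Sigma_j}|^2+|\nabla_{\Sigma_j}\log u_j|^2\big)\Big)\Big|_{\Sigma_{n-k}}.
\]
Equality in Step 3 therefore gives $\lambda_j=0$ for all $j$. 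But it gives $A_{\Sigma_j}=0$, $\nabla_{\Sigma_j}\log u_j=0$ and $\mathcal R_N(\psi)=k(k-1)$ only along the $k$-dimensional sphere $\Sigma_{n-k}$. For $j<n-k$ you do not know that the slices are totally geodesic or that $u_j$ is constant, and you learn nothing about $\psi$ off $\Sigma_{n-k}$. (Also, the Jacobi equation there gives $\operatorname{Ric}(\nu,\nu)=\nabla^2\psi_{j-1}(\nu,\nu)$, not $\operatorname{Ric}(\nu,\nu)=0$.)

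So ``from this one concludes that $\psi$ is constant'' is unfounded. Your proposed order --- first $\psi\equiv\mathrm{const}$ from the equality case, then an unweighted foliation argument --- cannot be carried out. In a slicing proof the weight disappears only at the end, and Step 4 must be an induction on $n-k$ whose hypothesis is the full weighted statement one dimension lower:
\begin{enumerate}
\item Apply the hypothesis to $\Sigma_1$; via the one-step identity this makes $u_1$ constant and $\Sigma_1$ totally geodesic.
\item Foliate a neighbourhood of $\Sigma_1$ by leaves of constant weighted mean curvature $c(t)$, using the implicit function theorem.
\item Note that the weighted inequality on each leaf acquires terms in $c(t)$ and $c'(t)$, and use the lower-dimensional weighted rigidity to fix the sign of $c'$.
\item Use weighted area-minimality of $\Sigma_1$ to force $c\equiv0$ and rigidity of every leaf.
\end{enumerate}
Only then do $\psi|_{\text{leaf}}=\mathrm{const}$ and $\partial_\nu\psi=0$ follow, after which an open--closed argument globalizes. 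None of this is in the proposal, and it is the substance of the theorem.

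The paper does not reprove Theorem \ref{Chow}; it quotes it from \cite{chow2025}. Its appendix (Proposition \ref{ns:prop:weighted-product}) does show how to realize your instinct of removing the weight first, but the device is a conformal change, not the equality case. For $\widehat g=e^{2\psi/(n-1)}g$, using $\|dA_{\mathbb S}\|_g\le1$,
\[
R_{\widehat g}-k(k-1)\|dA_{\mathbb S}\|_{\widehat g}^2=e^{-2\psi/(n-1)}\Big(\mathcal R_N(\psi)-k(k-1)\|dA_{\mathbb S}\|_g^2+\tfrac{1}{n-1}|\nabla_g\psi|^2\Big)\ge\tfrac{1}{n-1}e^{-2\psi/(n-1)}|\nabla_g\psi|^2 .
\]
So $(N,\widehat g)$ satisfies an unweighted inequality. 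The price is the dilation-dependent threshold $k(k-1)\|dA_{\mathbb S}\|_{\widehat g}^2$, because $A_{\mathbb S}$ need not be $1$-Lipschitz for $\widehat g$.

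The unweighted slicing-and-equality argument of \cite{Zhou-Zhu} then gives a cover by $(\mathbb S^k\times\mathbb R^{n-k},a\,g_{\mathbb S^k}+g_{\mathbb R^{n-k}})$ with a totally geodesic bottom sphere, which lifts to a spherical fibre. Applying Proposition \ref{ns:prop:weighted-sphere} on every fibre gives $\|dA_{\mathbb S}\|_{\widehat g}^2\equiv a^{-1}$, i.e.\ pointwise equality $R_{\widehat g}=k(k-1)\|dA_{\mathbb S}\|_{\widehat g}^2$ on all of $N$. The surplus term then forces $\nabla\psi\equiv0$ globally, and hence $a=1$. This supplies exactly what your Step 4 lacks: equality everywhere on $N$, not just along one slice.
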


The same principle leads naturally to the complete noncompact model
$\mathbb{S}^k\times \mathbb{T}^m\times\mathbb R$.  The torus-line factor
$\mathbb{T}^m\times\mathbb R$ is the noncompact analogue of the flat torus: its standard product metric is scalar flat, and complete scalar curvature rigidity for torus-line manifolds forces every complete metric with nonnegative scalar curvature to be flat, and hence rules out positivity somewhere. Compare \cite[Theorem 1.1]{Zhu1}. Consequently, the torus-line factor supplies no positive scalar-curvature budget, and the sharp lower bound for the full product remains exactly $k(k-1)$, the scalar curvature of the unit sphere $\mathbb{S}^k$.  This explains both why sphere--torus products arise in the closed problem and why sphere-torus-line products are the corresponding borderline models in the complete noncompact problem. The restriction to a single line is essential. Indeed, Hao-Shi-Sun \cite[Example~1.4]{HSS} showed that rigidity fails in the presence of two or more Euclidean directions. More precisely, for every $r\geq 2$, let $\Sigma^r\subset\mathbb R^{r+1}$
be a paraboloid of revolution which is a graph over $\mathbb R^r$, and
let $P:\Sigma^r\to\mathbb R^r$ be the restriction of the standard
projection. Then
\[
\operatorname{id}_{\mathbb{S}^k}\times P:
\mathbb{S}^k\times\Sigma^r\longrightarrow \mathbb{S}^k\times\mathbb R^r
\]
is a proper $1$-Lipschitz map of non-zero degree, while
\[
R_{\mathbb{S}^k\times\Sigma^r}=k(k-1)+R_{\Sigma^r}>k(k-1).
\]
Nevertheless, $\mathbb{S}^k\times\Sigma^r$ is not isometric to
$\mathbb{S}^k\times\mathbb R^r$. Thus, once the noncompact factor contains at least two line directions, even a strict scalar curvature lower bound does not imply the expected product rigidity. For this reason, we restrict attention to a single noncompact line, while still allowing arbitrary compact flat torus directions.

The complete noncompact case contains an additional compactness difficulty. Hao-Shi-Sun \cite[Theorem 1.5]{HSS} proved that a complete, connected, orientable four-manifold with scalar curvature at least $6$ is isometric to $\mathbb S^3\times\mathbb R$ if it admits a proper non-zero-degree $1$-Lipschitz map to $\mathbb S^3\times\mathbb R$ and has bounded geometry. They also observed that the bounded-geometry assumption should be unnecessary. In their proof that
assumption enters when a uniform local area lower bound is used to show that a finite-area limiting hypersurface is compact.  Consequently, merely deleting bounded geometry from that limiting argument would leave a genuine compactness issue.

Zhu's work on complete manifolds with nonnegative scalar curvature
\cite{Zhu1} provides a different rigidity mechanism. Besides the use of $h_\varepsilon$-minimizing boundaries, an important feature of Zhu's deformation argument is that a strict scalar-curvature improvement is first excluded by a topological obstruction; a compactly supported deformation in a negative Ricci direction then forces nonnegative Ricci curvature. Once nonnegative Ricci curvature is available, the geometry of the ends and the Cheeger-Gromoll splitting theorem \cite{C-G} convert the noncompact problem into a closed one. The purpose of the present paper is to adapt this
deformation-rigidity strategy to the sharp Llarull setting for
sphere-torus-line targets.

Our main result is the following theorem. Throughout this paper, $\mathbb{S}^k$ denotes the unit round sphere, and $\mathbb{T}^m$ denotes a fixed flat torus. $\mathbb{T}^0$ is understood to be a point.
\begin{theorem}\label{main}
   \textit{
Let $3\le n\le 7$, $2\le k\le n-1$, and $m=n-k-1$. Let $(M^n,g)$ be a complete, connected, noncompact spin Riemannian
manifold with $R_M\ge k(k-1).$ Suppose that there exists a smooth proper map of non-zero degree
\[
F:(M,g)\longrightarrow
\left(\mathbb{S}^k\times\mathbb{T}^m\times\mathbb{R},\,g_{\mathbb {S}^k}+g_{\mathbb{T}^m}+dt^2\right)
\]
such that
$\operatorname{pr}_{\mathbb{S}^k}\circ F:
(M,g)\longrightarrow(\mathbb{S}^k,g_{\mathbb{S}^k})$
is $1$-Lipschitz. Then $(M,g)$ is isometric to
\[
\left(
\mathbb{S}^k\times\mathbb{T}_\Lambda^m\times\mathbb{R},\,
g_{\mathbb{S}^k}+g_{\mathrm{flat}}+dt^2
\right),
\]
where $\Lambda\subset\mathbb{R}^m$ is a full lattice and
$\mathbb{T}_\Lambda^m=\mathbb{R}^m/\Lambda$ carries the induced
Euclidean metric $g_{\mathrm{flat}}$.}
\end{theorem}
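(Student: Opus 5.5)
The strategy follows Zhu's deformation-rigidity scheme. First a strict improvement of the scalar curvature is ruled out by a warped $\mu$-bubble argument combined with Theorem~\ref{Chow}. Next a deformation argument upgrades this to nonnegative Ricci curvature. Finally Cheeger--Gromoll splitting reduces the problem to a closed one, where Theorem~\ref{Chow} (or Theorem~\ref{Weighted-Llarull}) gives rigidity.

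\emph{Step 1: no strict improvement.} We show that no complete metric $\tilde g$ on $M$ satisfies the hypotheses with $R_{\tilde g}\ge k(k-1)$ everywhere and $R_{\tilde g}>k(k-1)$ somewhere. The $1$-Lipschitz condition only involves the spherical factor, so it is kept unchanged under compactly supported perturbations away from where $\tilde g<g$.
\begin{itemize}
\item Let $\rho=\operatorname{pr}_{\mathbb R}\circ F$. Since $F$ is proper, $\rho$ is proper. Smooth it to a Lipschitz proper function and choose $h_\varepsilon$ on the region $\{|\rho|<L\}$ with $h_\varepsilon\to\pm\infty$ at the two ends and $|\nabla h_\varepsilon|$ small. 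Using the strict positivity on a compact set, arrange $k(k-1)-R+\tfrac{n}{n-1}h^2-2|\nabla h|\le0$ there, with strict inequality on the bump.
\item Minimize the warped $\mu$-bubble functional $\int_{\partial\Omega}u-\int (\chi_\Omega-\chi_{\Omega_0})hu$, where $u$ is a suitable weight. In dimensions $n\le7$ this gives a smooth closed hypersurface $\Sigma$ homologous to $F^{-1}(\mathbb S^k\times\mathbb T^m\times\{0\})$. Hence $F|_\Sigma\colon\Sigma\to\mathbb S^k\times\mathbb T^m$ has nonzero degree, and $\Sigma$ is spin with the same $1$-Lipschitz spherical component.
\item The stability inequality produces a function $\psi$ (built from the first eigenfunction and the weight) with $-2\Delta\psi-|\nabla\psi|^2+R_\Sigma-k(k-1)\ge0$, strictly positive somewhere when $\Sigma$ meets the bump.
\item Theorem~\ref{Chow} then forces equality and a constant $\psi$, which is a contradiction. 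This requires that $\Sigma$ actually passes through the region of strict improvement. To ensure it, place the bump so that every such $\Sigma$ must cross it, for instance by first translating in the $\mathbb R$-direction and using that $\Sigma$ separates the two ends. Alternatively, shrink the parameters and analyse the equality case.
\end{itemize}

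\emph{Step 2: Ricci $\ge0$.} Suppose $\operatorname{Ric}(v,v)<0$ at some point $p$. Perturb $g_t=g-t\,\eta\,\operatorname{Ric}$ with $\eta$ a cutoff near $p$, and then conformally or through a Kazdan--Warner type correction, so that $R$ increases strictly near $p$ to first order. The subtle point is that the spherical projection must remain $1$-Lipschitz; this is where the negative-Ricci direction enters.
\begin{itemize}
\item Following Zhu, we exploit that in the equality situation of Step~1 one first shows that $\operatorname{pr}_{\mathbb S^k}\circ F$ is a local Riemannian submersion wherever the spherical bound is saturated.
\item We only decrease $g$ in directions orthogonal to the horizontal space, or we combine the perturbation with a small scaling.
\end{itemize}
This contradicts Step~1, so $\operatorname{Ric}\ge0$.

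\emph{Step 3: splitting.} Properness of $F$ and the nonzero degree onto a target with two ends give $M$ at least two ends, since $\rho^{-1}(t)$ separates $\rho\to\pm\infty$. Hence $M$ contains a line, and by Cheeger--Gromoll \cite{C-G} we have $M=N\times\mathbb R$ with $\operatorname{Ric}_N\ge0$. Because two ends exist, $N$ is compact, and $R_N=R_M\ge k(k-1)$. The map $\operatorname{pr}_{\mathbb S^k\times\mathbb T^m}\circ F|_{N\times\{t\}}$ for a regular fibre is homotopic to a map of nonzero degree $N\to\mathbb S^k\times\mathbb T^m$ with $1$-Lipschitz spherical part, and $N$ is spin.
\begin{itemize}
\item Theorem~\ref{Chow} with $\psi=0$ shows $N$ is isometrically covered by $\mathbb S^k\times\mathbb R^m$.
\item Since $\operatorname{Ric}_N\ge0$ and $\pi_1$ maps onto a finite-index subgroup of $\mathbb Z^m$, the structure theory of such quotients together with the degree condition identifies $N$ with $\mathbb S^k\times\mathbb T^m_\Lambda$. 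Concretely, the deck group acts by isometries of $\mathbb S^k\times\mathbb R^m$, and the rotational parts must be trivial because $A_\mathbb S$ has nonzero degree on each $\mathbb S^k$-slice.
\end{itemize}

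The main obstacle is Step~2: making the deformation increase scalar curvature while preserving the $1$-Lipschitz condition on the spherical factor and the completeness and properness hypotheses. Ensuring in Step~1 that the $\mu$-bubble detects a localized strict improvement is also delicate.
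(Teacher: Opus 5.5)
Steps~1 and~2 have genuine gaps, and they sit exactly where the paper's key mechanism lives; your Step~3 follows Section~\ref{sec-main} in outline.

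\textbf{Step 1.} An improvement of scalar curvature near a single point cannot be detected by the $\mu$-bubble as you set it up. Since $h$ must run from $+\infty$ to $-\infty$ across the band, the set where $\tfrac12h^2-|\nabla h|<0$ (or your $\tfrac{n}{n-1}h^2-2|\nabla h|<0$) must separate the two boundary components: along any unit-speed path, $|\nabla h|\le\tfrac12h^2$ would keep $h$ positive. On this separating set, the lower bound that stability gives for the weighted scalar curvature of $\Sigma$ is negative unless the curvature excess compensates there. A bump around a point does not separate. Since you have no control over where $\Sigma$ lies, you cannot guarantee even the non-strict weighted inequality on $\Sigma$, whether or not $\Sigma$ meets the bump. ``Placing'' or ``translating'' the bump is impossible: its location is dictated by $g$, and no translation symmetry is known yet. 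Letting the negative part tend to zero requires a limit of bubbles that may escape to infinity, which is precisely the compactness problem (bounded geometry in Hao--Shi--Sun) the paper is designed to avoid.

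The missing idea is to pass to the weighted quantity. Proposition~\ref{prop-noncompact-obstruction} shows that no $\psi$ satisfies $-2\Delta\psi-|\nabla\psi|^2+R_M-k(k-1)>0$ \emph{everywhere}. Then $E(\psi)>0$ on the whole slab, Lemma~\ref{lem-h} builds $h$, and the stable bubble satisfies a strict inequality everywhere, contradicting Proposition~\ref{prop-closed-obstruction}. Kazdan's Theorem~\ref{thm-kazdan} converts local positivity into such a global weight: $\psi=2\log u$ with $u$ bounded, positive, and $\bigl(-\Delta+\tfrac14(R-k(k-1))\bigr)u>0$. Your ``suitable weight $u$'' would have to be exactly this supersolution, which you never construct.

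\textbf{Step 2.} The tensor $-t\eta\operatorname{Ric}$ shrinks $g$ in every direction where $\operatorname{Ric}>0$, so $\operatorname{pr}_{\mathbb S^k}\circ F$ may stop being $1$-Lipschitz. Your remedies are not available:
the local-submersion claim is unproven and is essentially part of the conclusion; a global rescaling by $1+\epsilon$ pushes $R$ below $k(k-1)$ on all of $M$; and shrinking only along $\ker d(\operatorname{pr}_{\mathbb S^k}\circ F)$ needs structural information you do not have, while $-\eta\operatorname{Ric}$ is not adapted to that kernel anyway. Moreover, $R_{g_t}$ is not pointwise $\ge k(k-1)$, because the first variation contains the terms $\Delta\operatorname{tr}-\operatorname{div}\operatorname{div}$. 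So your unweighted Step~1 does not apply. A conformal correction would have to keep the factor $\ge1$ on noncompact $M$ while raising $R$, and you do not show this is possible.

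The paper uses only the negative direction. It sets $B=\operatorname{Ric}(V,V)\,\omega\otimes\omega\le0$ near $q$, so $g_t=g-2t\eta B\ge g$ only enlarges lengths, and $\langle B,\operatorname{Ric}\rangle=\operatorname{Ric}(V,V)^2>0$. With $R\equiv k(k-1)$ from Proposition~\ref{prop-scalar}, the first Neumann eigenvalue of $-\Delta_{g_t}+\tfrac14(R_{g_t}-k(k-1))$ on $D\supset\operatorname{supp}\eta$ becomes positive for small $t>0$, since the divergence terms integrate out. Kazdan's theorem then yields a global weight contradicting Proposition~\ref{prop-noncompact-obstruction}.

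Finally, in Step~3, ``nonzero degree on each slice'' alone does not kill the deck rotations. You also need $b_1(N)=m$ (Lemma~\ref{inj} plus Bochner) to make the Euclidean parts pure translations. You further need the bottom-sphere isometry together with Lemma~\ref{linear} to show that the lifted spherical map is one fixed isometry, independent of $y$.
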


\begin{remark}[Removal of the spin assumption]
The recent work of Wang-Wang-Xie-Zhu \cite[Theorem~1.2]{WWXZ2026} implies that the spin assumption in Theorem \ref{main} can be replaced by
orientability. Indeed, the weighted spherical and sphere--torus rigidity statements used in this paper admit oriented versions, obtained by a
conformal argument. The remaining steps require no spin structure.
We explain this in Appendix~\ref{app-remove-spin}.
\end{remark}

The lattice in the conclusion is not expected
to coincide with the lattice of the target torus, since the hypotheses do not control lengths in the torus directions. In fact, the target torus is not determined even if one strengthens the
hypothesis by requiring the entire product map to be $1$-Lipschitz. To
see this, assume that $m\geq1$, fix $a>1$, and equip
$\mathbb S^k\times \mathbb T^m\times\mathbb R$ with the metric
\[
g=g_{\mathbb S^k}+a^2g_{\mathbb T^m}+dt^2.
\]
The identity map
\[
\operatorname{id}:
\bigl(\mathbb S^k\times \mathbb T^m\times\mathbb R,
      g_{\mathbb S^k}+a^2g_{\mathbb T^m}+dt^2\bigr)
\longrightarrow
\bigl(\mathbb S^k\times \mathbb T^m\times\mathbb R,
      g_{\mathbb S^k}+g_{\mathbb T^m}+dt^2\bigr)
\]
is proper, $1$-Lipschitz, of degree one, and the scalar curvature
of the source manifold is $k(k-1)$. However, the source torus
$(\mathbb T^m,a^2g_{\mathbb T^m})$ is not isometric to the target torus
$(\mathbb T^m,g_{\mathbb T^m})$.

When $n=4,\,k=3,\,m=0$, the conclusion is $\mathbb S^3\times\mathbb R$ and no
bounded geometry assumption is made.  Thus, under the spin hypothesis,
Theorem \ref{main} removes the bounded geometry condition from the
rigidity theorem of Hao-Shi-Sun.

We briefly describe the proof.  The real component of the proper map has a
compact regular level on which the sphere-torus component of the map still has non-zero degree. A proper height function then determine a compact Riemannian band. Minimizing a warped $\mu$-bubble functional inside this band produces a closed hypersurface
without taking a limit at infinity. Its stability inequality yields a strict weighted scalar curvature inequality, which contradicts the closed weighted sphere-torus rigidity theorem. This gives the noncompact weighted obstruction that replaces the bounded geometry compactness step. The weighted obstruction has two consequences.  Kazdan's positive supersolution theorem \cite{Kazdan} first forces the scalar curvature to be
identically $k(k-1)$.  If the Ricci tensor had a negative direction, a
compactly supported deformation enlarging the metric in that direction
would preserve the spherical $1$-Lipschitz condition and create the local
spectral positivity needed for the same supersolution argument.  The
weighted obstruction therefore forces nonnegative Ricci curvature.  The
proper real component also implies that the manifold has at least two ends,
so the Cheeger-Gromoll theorem splits it as $N\times\mathbb R$ with $N$
compact.  A compactly supported cohomological degree argument shows that the
map induced on every product slice $N\times\{s\}$ has non-zero degree.
Finally, Chow's closed rigidity theorem and an analysis of the deck group
show that $N$ is the product of the round sphere with a flat torus.  The
spherical component of every deck transformation is eliminated by the
lifted $1$-Lipschitz map, and the remaining Euclidean translations form a
full lattice.

The paper is organized as follows.  Section \ref{sec-2} recalls warped
$\mu$-bubbles, the proper degree slicing argument, and the Riccati barrier. Section \ref{sec-sup} upgrades closed sphere-torus rigidity from an isometric covering statement to the required global isometry statement. Section \ref{w-o} proves the closed and noncompact weighted obstructions.  Section \ref{sec-scalar-ricci} establishes scalar
curvature equality and nonnegative Ricci curvature. Section \ref{sec-main} combines the end argument, Cheeger–Gromoll splitting, degree nonvanishing on the compact slice, and the deck transformation group analysis to prove the main theorem. In Appendix \ref{app-remove-spin}, we prove Theorem \ref{main} without the spin assumption.\\

\noindent\textbf{AI usage:}\, The author used generative AI tools to assist in developing the argument showing that the projection of the deck transformation group to $\operatorname{Isom}(\mathbb{S}^k)$ is trivial, and in drafting Appendix \ref{app-remove-spin}. Both this argument and the mathematical content of Appendix \ref{app-remove-spin} were independently checked and verified by the author. All other mathematical arguments and proofs were developed independently by the author, who takes full responsibility for the content of this paper.\\

\noindent\textbf{Acknowledgements:}\, The author would like to thank Professor Linfeng Zhou for his continued encouragement and support. The author is partially supported by Fundamental and Interdisciplinary Disciplines Breakthrough Plan of the Ministry of Education of China (JYB2025XDXM112).

    \section{Warped $\mu$-bubbles and degree}\label{sec-2}
    \subsection{basics of warped $\mu$-bubbles}
    In this subsection, we recall the basic knowledge of $\mu$-bubbles, which is our main technical tool. For a comprehensive understanding of warped $\mu$-bubbles, we refer to Section 3 in \cite{Cho-Li}.

    A Riemannian band $(M^{n},g)$ is a compact, connected, orientable, smooth manifold with a metric $g$ and nonempty boundary $\partial M$ such that  \[\partial M=\partial_{-}M\cup \partial_{+}M, \quad\partial_{-}M \neq \emptyset,\quad \partial_{+}M \neq \emptyset, \quad\partial_{-}M\cap \partial_{+}M=\emptyset.\] 
	On a Riemannian band $(M^{n},g)$, let $\overset{\circ}{M}$ denote the interior of $M$, $\psi$ be a smooth function on $M$ and $h$ be a smooth function on $\overset{\circ}{M}$ with $h\to\mp\infty$ on $\partial_\pm M$. A Caccioppoli set $\Omega_0\subset M$ is chosen such that its boundary $\partial\Omega_0\subset\overset{\circ}{M}$ is smooth and $\partial_{-}M\subset \Omega_0$. Let $\partial^{\ast} \Omega$ denote the reduced boundary of the Caccioppoli set $\Omega$. Consider the following functional 
	\begin{equation}\label{eqn-omega}
		\mathcal A_h(\Omega) = \int_{\partial^{\ast} \Omega}e^\psi d\mathcal{H}^{n-1} - \int_{M} (\chi_{\Omega}
		- \chi_{{\Omega_0}})he^\psi\,d\mathcal{H}^{n} 
	\end{equation}
	for $\Omega\in \mathcal{C}$, where $\mathcal{C}$ is defined as
	\[\mathcal{C}=\{\Omega\subset M:\mbox{ all Caccioppoli sets } \Omega \subset M \mbox{ and }\Omega\triangle {\Omega_0}\Subset \overset{\circ}{M}\}, \]
	here $\mathcal{H}^{n}$ denotes $n$-dimensional Hausdorff measure. We usually call $\Omega$ minimizing $\mathcal A_h(\Omega)$ in this class a warped $\mu$-bubble.

    From Proposition 12 in \cite{Cho-Li}, we have the following existence result of warped $\mu$-bubbles.
    \begin{lemma}[Existence of warped $\mu$-bubble]\label{lem-existence}
		\textit{For a Riemannian band $(M^{n},g)$ with $3\leq n\leq 7$, if $h\in C^{\infty}(\overset{\circ}{M})$ with $h\to \mp\infty$ on $\partial_{\pm}M$, then there exists a smooth minimizer $\Omega$ in $\mathcal{C}$ for $\mathcal{A}_h$.}
	\end{lemma}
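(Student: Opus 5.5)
The plan is the standard direct method of geometric measure theory, adapted to the weight $e^\psi$ and to the prescribed-mean-curvature term, in the spirit of Proposition 12 of \cite{Cho-Li}.

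\textbf{Step 1: the functional is bounded below.} Since $M$ is compact and $\psi$ is smooth, $e^\psi$ is bounded above and below by positive constants. The volume term only involves $(\chi_\Omega-\chi_{\Omega_0})h$. The hypothesis $h\to\mp\infty$ on $\partial_\pm M$ gives $h>0$ near $\partial_-M$ and $h<0$ near $\partial_+M$. Hence adding mass near $\partial_-M$ or removing mass near $\partial_+M$ only increases $\mathcal A_h$. This gives a finite lower bound for $\mathcal A_h$ on $\mathcal C$. It also gives a uniform perimeter bound along any minimizing sequence.

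\textbf{Step 2: confinement to a fixed compact set.} This is the key issue, because $h$ is unbounded near the boundary. Choose $\varepsilon>0$ so small that $|h|>C$ on the $\varepsilon$-collars $U_\pm$ of $\partial_\pm M$, where $C$ dominates the mean curvature of the level sets of the distance to $\partial_\pm M$, weighted by $\psi$.

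Replace $\Omega$ by $\Omega\cup U_-^{\delta}$ and by $\Omega\setminus U_+^{\delta}$ for suitable small sub-collars. A calibration/comparison argument using the divergence theorem on the foliation by equidistant hypersurfaces shows these replacements do not increase $\mathcal A_h$. Concretely, the vector field $e^\psi\nabla\rho$, with $\rho$ the distance to the boundary, has weighted divergence dominated by $|h|e^\psi$.

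So one may assume every element of the minimizing sequence agrees with $\Omega_0$ outside a fixed compact $K\Subset\overset{\circ}{M}$. I expect this barrier step to be the main obstacle, namely ensuring the modified sets stay in $\mathcal C$ and the inequality is strict enough.

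\textbf{Step 3: compactness and regularity.} By BV compactness, a subsequence converges in $L^1$ to a Caccioppoli set $\Omega$ with $\Omega\triangle\Omega_0\subset K$. The perimeter is lower semicontinuous, and the volume term is continuous since $h$ is bounded on $K$. Therefore $\Omega$ minimizes $\mathcal A_h$ in $\mathcal C$.

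Locally, $\partial^*\Omega$ is an almost-minimizer of the weighted perimeter, since the volume term is a bounded perturbation on $K$. Standard regularity theory then applies: a conformal change turns weighted area into area for the metric $e^{2\psi/(n-1)}g$. This shows the singular set has dimension at most $n-8$. For $3\le n\le 7$ the singular set is empty, so $\partial\Omega$ is smooth. Elliptic bootstrapping from the prescribed mean curvature equation $H=h-\partial_\nu\psi$ gives $C^\infty$.
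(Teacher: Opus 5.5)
Your argument is correct. It is essentially the proof of Proposition 12 in \cite{Cho-Li}, which the paper cites without reproducing: the collar replacement in Step 2 works as you describe, since the divergence theorem for $e^\psi\nabla\rho$ gives $\mathcal A_h(\Omega)-\mathcal A_h(\Omega\setminus U_+^\delta)\ge\int_{\Omega\cap U_+^\delta}(|h|-C)e^\psi\,d\mathcal H^n\ge0$ for a.e.\ small $\delta$ (symmetrically at $\partial_-M$), and the modified sets stay in $\mathcal C$ because for small $\delta$ the set $\Omega_0$ contains $U_-^\delta$ and misses $U_+^\delta$; BV compactness and almost-minimizer regularity for $n\le7$ then finish the proof.

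The only slip is in Step 1, where the signs are reversed. Since $h>0$ near $\partial_-M\subset\Omega_0$ and $h<0$ near $\partial_+M$, it is removing mass near $\partial_-M$ or adding mass near $\partial_+M$ that increases the volume term. That is what yields the lower bound, and it agrees with the replacements you make in Step 2.
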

	We now discuss the first and second variations of $\mu$-bubbles. We also refer to \cite{Cho-Li}.
\begin{lemma}(\cite[Lemma 13]{Cho-Li}, first variation of $\mu$-bubbles)\label{lem-first-variation}
		\textit{Let $\Omega_{t}$ be a smooth $1$-parameter family of regions in $\mathcal{C}$ with $\Omega_{0}=\Omega$ and normal speed $\varphi$ at $t=0$. Then
		\begin{equation}\label{first-variation}
			\frac{d}{dt} \mathcal{A}_h(\Omega_t) |_{t=0}=
			\int_{\partial\Omega}\left(H+\langle\nabla_M\psi,\nu\rangle-h\right)\varphi e^\psi,
		\end{equation}
		where $H$ is the mean curvature of $\partial \Omega$, $\nu$ is the outwards pointing unit normal. In particular, a warped $\mu$-bubble $\Omega$ satisfies
		\[H=-\langle\nabla_M\psi,\nu\rangle+h\]
        along $\partial\Omega$.}
    \end{lemma}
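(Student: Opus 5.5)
The plan is to compute the derivative of $\mathcal A_h(\Omega_t)$ at $t=0$ by differentiating the two terms of \eqref{eqn-omega} separately. Throughout, I assume the minimizer $\Omega$ from Lemma~\ref{lem-existence} has smooth boundary $\partial\Omega\subset\overset{\circ}{M}$. I also assume that $\Omega_t$ is generated by the flow of a vector field $X$, compactly supported in $\overset{\circ}{M}$, with $X=\varphi\nu$ along $\partial\Omega$. This is allowed because membership in $\mathcal C$ only involves the symmetric difference with $\Omega_0$ being compactly contained in the interior. Since the flow is supported away from $\partial M$, the singular behaviour $h\to\mp\infty$ plays no role, and $h$, $\psi$ are smooth on the support of $X$.

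\emph{Weighted area term.} I will use the standard first variation of area, $\frac{d}{dt}d\mathcal H^{n-1}\big|_{t=0}=\operatorname{div}_{\partial\Omega}X\,d\mathcal H^{n-1}$. Next, split $X=\varphi\nu+X^T$. With the convention that $H$ is computed with respect to the outward normal $\nu$, we have $\operatorname{div}_{\partial\Omega}(\varphi\nu)=H\varphi$. For the weight, $\frac{d}{dt}e^{\psi\circ\Phi_t}=\langle\nabla_M\psi,X\rangle e^\psi$. Collecting terms, the integrand becomes
\[
\bigl(H\varphi+\langle\nabla_M\psi,\nu\rangle\varphi\bigr)e^\psi
+\operatorname{div}_{\partial\Omega}(X^T)e^\psi+\langle\nabla_{\partial\Omega}\psi,X^T\rangle e^\psi .
\]
The last two terms together equal $\operatorname{div}_{\partial\Omega}(e^\psi X^T)$. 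This integrates to zero on the closed hypersurface $\partial\Omega$.

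\emph{Bulk term.} The term $\int_M\chi_{\Omega_0}he^\psi$ is constant in $t$. For the remaining term, the change of variables formula (or Reynolds transport) gives
\[
\frac{d}{dt}\int_{\Omega_t}he^\psi=\int_\Omega\operatorname{div}(he^\psi X)=\int_{\partial\Omega}he^\psi\varphi .
\]
The second equality is the divergence theorem, using that $X$ vanishes near $\partial M$. Subtracting this from the area contribution yields \eqref{first-variation}.

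\emph{Euler--Lagrange equation.} Since $\Omega$ minimizes $\mathcal A_h$ in $\mathcal C$, the derivative vanishes for every $\varphi\in C^\infty(\partial\Omega)$. Every such $\varphi$ is realized as the normal speed of an admissible variation, by extending $\varphi\nu$ to a compactly supported field in $\overset{\circ}{M}$. Because $e^\psi>0$, the fundamental lemma of the calculus of variations gives $H+\langle\nabla_M\psi,\nu\rangle-h=0$ on $\partial\Omega$.

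The main point needing care is bookkeeping of conventions rather than a real obstacle. The sign of $H$ must match the outward normal, so that a sphere bounding a ball has $H>0$ and the term $\int_{\Omega_t}he^\psi$ enters with a minus sign. One must also confirm that the smoothness of $\partial\Omega$ supplied by Lemma~\ref{lem-existence} (valid since $n\le7$) justifies the classical computation.
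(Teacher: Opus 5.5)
Your computation is correct and is the standard one. The paper gives no proof of its own; it simply cites \cite[Lemma 13]{Cho-Li}, which rests on exactly this first variation of weighted area together with the transport formula for the bulk term.

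The one slip is bookkeeping. Since $\partial_-M\subset\Omega_0\cap\Omega_t$ and $h\to+\infty$ there, $h$ need not be integrable near $\partial_-M$: for the $h$ of Lemma~\ref{lem-h} one has $h\circ\phi\approx 2/(\phi+a)$ near $\phi=-a$. So the separate integrals $\int_M\chi_{\Omega_0}he^\psi$ and $\int_{\Omega_t}he^\psi$ can both be infinite. You should instead differentiate $\int_M(\chi_{\Omega_t}-\chi_{\Omega})he^\psi$. This is supported in the compact set $\operatorname{supp}X\subset\overset{\circ}{M}$, because the flow is the identity off $\operatorname{supp}X$. With that change, your divergence-theorem step goes through verbatim.
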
    
    
	\begin{lemma}(\cite[Lemma 14]{Cho-Li}, second variation of $\mu$-bubbles)\label{second-variation}
\textit{
Consider a warped $\mu$-bubble $\Omega$ with $\partial\Omega=\Sigma$. Assume that $\Omega_{t}$ is a smooth $1$-parameter family of regions in $\mathcal{C}$ with $\Omega_{0}=\Omega$ and normal speed $\varphi$ at $t=0$. Then $\mathcal{Q}(\varphi)\coloneqq\frac{\mathrm{d}^2}{\mathrm{d} t^2} \mathcal{A}_h(\Omega_t)|_{t = 0}\ge0$, where $\mathcal{Q}(\varphi)$ satisfies
		\begin{equation}
\begin{split}  
		\mathcal{Q}(\varphi)=&\int_{\Sigma}\bigl(-\Delta_\Sigma\varphi-\langle\nabla_\Sigma\psi\,,\nabla_\Sigma\varphi\rangle+\frac{1}{2}(R_\Sigma-2\Delta_\Sigma\psi-|\nabla_\Sigma\psi|^2)\\
        &-\frac{1}{2}(R_M-2\Delta_M\psi-|\nabla_M\psi|^2)-\frac{1}{2}|A_\Sigma|^2-\langle\nabla_Mh\,,\nu\rangle-\frac{1}{2}h^2\bigr)\varphi e^\psi,
        \end{split}
		\end{equation}
    where $A_\Sigma$ is the second fundamental form of $\Sigma$.
        }
\end{lemma}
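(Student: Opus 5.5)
The plan is to differentiate the first variation formula of Lemma \ref{lem-first-variation} once more along the family $\Omega_t$. Write $\Sigma_t=\partial\Omega_t$, with outward normal $\nu_t$ and mean curvature $H_t$. Lemma \ref{lem-first-variation} holds for each $t$ (applying it to the family $\Omega_{t+s}$ at $s=0$), so
\[
\frac{d}{dt}\mathcal A_h(\Omega_t)=\int_{\Sigma_t}\bigl(H_t+\langle\nabla_M\psi,\nu_t\rangle-h\bigr)\varphi_t e^\psi .
\]
Since $\Omega$ is a warped $\mu$-bubble, the factor $H+\langle\nabla_M\psi,\nu\rangle-h$ vanishes identically on $\Sigma$ at $t=0$. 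Therefore, when we differentiate again at $t=0$, every term in which the derivative falls on $\varphi_t$, on $e^\psi$, or on the area element is multiplied by zero. This leaves
\[
\mathcal Q(\varphi)=\int_\Sigma \frac{\partial}{\partial t}\Big|_{t=0}\bigl(H_t+\langle\nabla_M\psi,\nu_t\rangle-h\bigr)\,\varphi e^\psi .
\]
Because $\Omega$ minimizes $\mathcal A_h$ in $\mathcal C$ and $\Omega_t\in\mathcal C$, the function $t\mapsto\mathcal A_h(\Omega_t)$ has a minimum at $t=0$, so $\mathcal Q(\varphi)\ge0$.

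Next I would insert the three standard evolution formulas for a normal variation with speed $\varphi$:
\[
\partial_tH=-\Delta_\Sigma\varphi-\bigl(|A_\Sigma|^2+\mathrm{Ric}_M(\nu,\nu)\bigr)\varphi,
\qquad
\partial_t\nu=-\nabla_\Sigma\varphi,
\qquad
\partial_t h=\langle\nabla_M h,\nu\rangle\varphi .
\]
The middle formula gives
\[
\partial_t\langle\nabla_M\psi,\nu\rangle=\nabla^2_M\psi(\nu,\nu)\varphi-\langle\nabla_\Sigma\psi,\nabla_\Sigma\varphi\rangle .
\]
Substituting these, the integrand becomes
\[
-\Delta_\Sigma\varphi-\langle\nabla_\Sigma\psi,\nabla_\Sigma\varphi\rangle
+\Bigl(-|A_\Sigma|^2-\mathrm{Ric}_M(\nu,\nu)+\nabla^2_M\psi(\nu,\nu)-\langle\nabla_Mh,\nu\rangle\Bigr)\varphi ,
\]
all multiplied by $e^\psi$.

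The remaining step is algebraic rearrangement, which I expect to be the only delicate point because of signs. Three identities are needed.
\begin{enumerate}[(i)]
\item The traced Gauss equation gives $\mathrm{Ric}_M(\nu,\nu)+|A_\Sigma|^2=\tfrac12\bigl(R_M-R_\Sigma+H^2+|A_\Sigma|^2\bigr)$.
\item The splitting of the Laplacian along $\Sigma$ gives $\nabla^2_M\psi(\nu,\nu)=\Delta_M\psi-\Delta_\Sigma\psi-H\langle\nabla_M\psi,\nu\rangle$.
\item The Euler--Lagrange equation $H=h-\langle\nabla_M\psi,\nu\rangle$ gives
\[
-\tfrac12H^2-H\langle\nabla_M\psi,\nu\rangle=-\tfrac12\bigl(H+\langle\nabla_M\psi,\nu\rangle\bigr)^2+\tfrac12\langle\nabla_M\psi,\nu\rangle^2=-\tfrac12h^2+\tfrac12\langle\nabla_M\psi,\nu\rangle^2 .
\]
\end{enumerate}
Finally, $\langle\nabla_M\psi,\nu\rangle^2=|\nabla_M\psi|^2-|\nabla_\Sigma\psi|^2$. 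Collecting terms, the zeroth-order coefficient of $\varphi$ becomes
\[
\tfrac12\bigl(R_\Sigma-2\Delta_\Sigma\psi-|\nabla_\Sigma\psi|^2\bigr)-\tfrac12\bigl(R_M-2\Delta_M\psi-|\nabla_M\psi|^2\bigr)-\tfrac12|A_\Sigma|^2-\langle\nabla_Mh,\nu\rangle-\tfrac12h^2 ,
\]
which is exactly the stated formula for $\mathcal Q(\varphi)$.

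The point to verify carefully is that the sign conventions are used consistently throughout: the outward normal, $H$ positive for the boundary of a convex region, and the resulting sign in $\partial_t\nu$. These are the same conventions under which Lemma \ref{lem-first-variation} takes the stated form.
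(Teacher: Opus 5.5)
Your proposal is correct and takes essentially the same approach as the paper. The paper gives no proof of its own and cites \cite[Lemma 14]{Cho-Li}, whose argument is the direct computation you give: differentiate the first variation, drop every term multiplied by the vanishing quantity $H+\langle\nabla_M\psi,\nu\rangle-h$, then rearrange using the traced Gauss equation, the splitting $\Delta_M\psi=\Delta_\Sigma\psi+H\langle\nabla_M\psi,\nu\rangle+\nabla^2_M\psi(\nu,\nu)$, and $H=h-\langle\nabla_M\psi,\nu\rangle$ (the weight $u=e^\psi$ in Chodosh--Li's notation); your signs are consistent throughout.
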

\subsection{Slicing a proper map of non-zero degree}

We first give the topological use of the real component.

\begin{lemma}\label{lem-degree-slice}
\textit{
Let $X^{n-1}$ be a closed, connected, and oriented manifold. Let $M^n$ be a connected
and oriented manifold, and let $F=(A,b):M\to X\times\mathbb{R}$ be a smooth and proper map with non-zero degree. Then $b$ is proper and surjective. For any regular value
$t_0$ of $b$, the level set $S_0=b^{-1}(t_0)$ is a closed oriented
hypersurface and
\[
\deg(A|_{S_0})=\deg F\ne0.
\]
For a disconnected level set, the degree on the left denotes the sum of the
degrees of its connected components.}
\end{lemma}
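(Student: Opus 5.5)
Properness of $b$ comes directly from properness of $F$. Since $X$ is compact, for any compact interval $I\subset\mathbb{R}$ the set $X\times I$ is compact. Hence $b^{-1}(I)=F^{-1}(X\times I)$ is compact, and $b$ is proper.

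Surjectivity of $b$ follows from $\deg F\neq 0$. A proper map of non-zero degree between connected oriented manifolds of the same dimension is surjective: if some point were missed, it would be a regular value with empty preimage, and the degree would vanish. So $F$ is onto $X\times\mathbb{R}$, and therefore $b=\operatorname{pr}_{\mathbb{R}}\circ F$ is onto $\mathbb{R}$.

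Now fix a regular value $t_0$ of $b$. Then $S_0=b^{-1}(t_0)$ is a smooth embedded hypersurface. It is compact because $b$ is proper, and it is nonempty because $b$ is surjective. It is cooriented by $\nabla b$, so it inherits an orientation from $M$ via the convention ``outward normal first'', using $\partial_t$ on the target side. Thus $S_0$ is a closed oriented hypersurface, possibly disconnected, and $A|_{S_0}:S_0\to X$ has a well-defined total degree.

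To compute that degree, I pick a point $x\in X$ such that $x$ is a regular value of $A|_{S_0}$; Sard's theorem provides one. I then show that $(x,t_0)$ is a regular value of $F$. At any $p\in F^{-1}(x,t_0)$ we have $db_p\neq0$ and $dA_p|_{T_pS_0}$ onto $T_xX$, so $dF_p=(dA_p,db_p)$ is surjective. Indeed, $T_pM=T_pS_0\oplus\mathbb{R}\nu$ with $db(\nu)>0$, and in this splitting $dF_p$ is block triangular:
\[
dF_p=\begin{pmatrix} dA_p|_{T_pS_0} & * \\ 0 & db_p(\nu)\end{pmatrix}.
\]
Hence $\det dF_p$ has the same sign as $\det\bigl(dA_p|_{T_pS_0}\bigr)$, provided the orientations are matched: $M$ oriented as $\nu\oplus T S_0$ or $TS_0\oplus\nu$, and $X\times\mathbb{R}$ oriented correspondingly. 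The preimages also agree, $F^{-1}(x,t_0)=(A|_{S_0})^{-1}(x)$. Summing the local signs then gives $\deg F=\deg(A|_{S_0})$, which is non-zero, and the total degree is the sum over the components of $S_0$.

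The one point requiring care is the orientation bookkeeping. The convention for orienting $S_0$ must be chosen compatibly with the product orientation of $X\times\mathbb{R}$, so that the signs coincide and do not merely agree up to a global $\pm$. Even if the conventions gave a global sign $(-1)^{n-1}$, the conclusion $\deg(A|_{S_0})\neq0$ would still hold. I would therefore fix the convention $[TS_0]$ so that $[TS_0]\wedge\nu=[TM]$, matching the product orientation $[TX]\wedge\partial_t$; this yields exact equality.
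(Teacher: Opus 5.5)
Your proof is correct, but it reaches the degree identity by a different route than the paper. The paper uses the de Rham description of degree. With $\omega=A^*\Theta$ for a normalized top form $\Theta$ on $X$, and $\zeta\in C_c^\infty(\mathbb{R})$ of integral one, it writes $\deg F=\int_M\omega\wedge\zeta(b)\,db$. The coarea formula turns this into $\int_{\mathbb{R}}\zeta(t)\bigl(\int_{b^{-1}(t)}\omega\bigr)dt$. Stokes' theorem on the compact region $b^{-1}([t_1,t_2])$, where $d\omega=0$, then shows that $\int_{b^{-1}(t)}\omega$ takes the same value at all regular $t$, so it equals $\deg(A|_{S_0})$. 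You instead count preimages: a regular value $x$ of $A|_{S_0}$ gives a regular value $(x,t_0)$ of $F$ with $F^{-1}(x,t_0)=(A|_{S_0})^{-1}(x)$, and the block-triangular form of $dF_p$ matches the local signs. Your argument is more elementary and pointwise. It handles the given level $t_0$ directly and needs neither the coarea formula nor a comparison of two levels. It also justifies the surjectivity of $F$ (hence of $b$), which the paper merely asserts. The paper's version has the advantage that the compactly supported closed form $\zeta(b)\,db$ is reused verbatim in the cohomological argument of Lemma \ref{lem-split-degree}. Its orientation convention is the same as your final one, since $(\omega\wedge db)(e_1,\dots,e_{n-1},\nu)=\omega(e_1,\dots,e_{n-1})\,db(\nu)$ forces $[TS_0]\wedge\nu=[TM]$ against $[TX]\wedge\partial_t$. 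You should delete your earlier phrase about the normal coming first: with the product orientation of $X\times\mathbb{R}$ it would introduce exactly the sign $(-1)^{n-1}$ you mention.
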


\begin{proof}
$b$ is surjective because $F$ is surjective. For a compact set $K\subset\mathbb{R}$, the identity
$b^{-1}(K)=F^{-1}(X\times K)$ gives properness.

Choose a top form $\Theta$ on $X$ with $\int_X\Theta=1$ and put
$\omega=A^*\Theta$. Choose a compactly supported function $\zeta$ on $\mathbb{R}$ such that $\int_{\mathbb{R}}\zeta(t)dt=1$. By degree formula and co-area formula, we have
\begin{align*}
    \mathrm{deg}(F)=&\int_M\omega\wedge\zeta(b)\,db\\
=&\int_\mathbb{R}\zeta(t)\left(\int_{b^{-1}(t)}\omega\right)dt.
\end{align*}
For regular values $t_1<t_2$, the region
$b^{-1}([t_1,t_2])$ is compact, by Stokes' theorem, we obtain
\[
\int_{b^{-1}(t_1)}\omega-\int_{b^{-1}(t_2)}\omega=\int_{b^{-1}([t_1,t_2])}d\omega=0.
\]
It follows that 
\[\mathrm{deg}(F)=\int_{b^{-1}(t_0)}\omega=\mathrm{deg}(A|_{S_0}).\]
This completes the proof.
\end{proof}
We also need the following height function $\phi$ for our later proof.
\begin{lemma}(\cite[Lemma 2.1]{Zhu1})\label{lem-proper-fcn}
\textit{
Let $(M,g)$ be an orientable complete open Riemannian manifold, and let
$S\subset M$ be an orientable closed hypersurface associated with a surjective signed distance function $\rho:M\to\mathbb{R}$. Then there
is a smooth proper surjective function $\phi:M\to\mathbb{R}$ such that
\[
\phi^{-1}(0)=S,
\qquad |\nabla_M\phi|<1, \qquad
\]}
\end{lemma}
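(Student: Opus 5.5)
The plan is to obtain $\phi$ by smoothing a rescaled copy of the signed distance function $\rho$, which is already $1$-Lipschitz, proper and surjective. The only changes needed are to make it smooth and strictly contracting, and to arrange that its zero set is exactly $S$.

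\textbf{Step 1: a strict Lipschitz version.} Set $\rho_1=\tfrac12\rho$. This function is proper, because $|\rho(x)|\ge d(x,S)-C$ and $S$ is compact, so closed balls around $S$ are compact by completeness. It is surjective, has zero set exactly $S$, and is $\tfrac12$-Lipschitz.

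\textbf{Step 2: smoothing away from $S$.} Since $S$ is a smooth closed hypersurface, $\rho$ is smooth on a tubular neighborhood $U=\{|\rho|<2\delta\}$, where it satisfies $|\nabla\rho|=1$. On $M\setminus\{|\rho|\le\delta\}$ I would apply the standard Greene--Wu smoothing of Lipschitz functions on complete manifolds. This gives a smooth $\tilde\rho$ with $|\tilde\rho-\rho_1|<\varepsilon(x)$ for a small positive function $\varepsilon$, and $|\nabla\tilde\rho|<\tfrac12+\eta<1$.

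\textbf{Step 3: gluing and checking the conditions.} Let $\chi$ be a cutoff equal to $1$ on $\{|\rho|\le\delta\}$ and supported in $U$, with $|\nabla\chi|\le C/\delta$. Define
\[
\phi=\chi\rho_1+(1-\chi)\tilde\rho .
\]
Its gradient is
\[
\nabla\phi=\chi\nabla\rho_1+(1-\chi)\nabla\tilde\rho+(\rho_1-\tilde\rho)\nabla\chi .
\]
Taking $\varepsilon\ll\delta/C$ on $U$ keeps $|\nabla\phi|<\tfrac12+\eta+C\varepsilon/\delta<1$. Taking $\varepsilon$ also smaller than $\delta/4$ gives $|\phi|\ge\delta/4>0$ off $\{|\rho|\le\delta\}$, with the same sign as $\rho$. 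Inside $\{|\rho|\le\delta\}$ we have $\phi=\rho_1$, so $\phi^{-1}(0)=S$. Properness and surjectivity pass from $\rho_1$ to $\phi$ because $|\phi-\rho_1|$ is bounded.

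\textbf{Main obstacle.} The delicate point is the gluing: the smoothing error must be controlled relative to the cutoff gradient, so that the strict bound $|\nabla\phi|<1$ survives. The factor $\tfrac12$ leaves enough room for this. The smoothing theorem itself I would cite rather than reprove.
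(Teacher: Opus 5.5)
Your proposal is correct and follows the standard argument behind the cited \cite[Lemma 2.1]{Zhu1} (the paper gives no proof of its own): rescale the $1$-Lipschitz signed distance function, leave it unchanged near $S$ where it is already smooth with $|\nabla\rho|=1$, and smooth it elsewhere with an error small enough to control both the zero set and the cutoff term $(\rho_1-\tilde\rho)\nabla\chi$. Two small points need tidying: first, take $\chi\equiv1$ on a neighborhood of $\{|\rho|\le\delta\}$ (e.g.\ on $\{|\rho|\le\tfrac32\delta\}$), or simply smooth $\rho_1$ on all of $M$, so that $(1-\chi)\tilde\rho$ is genuinely smooth across $\{|\rho|=\delta\}$; second, $\rho$ is $1$-Lipschitz across $S$ only because $S$ separates the two sign regions (here $\{b>t_0\}$ and $\{b<t_0\}$), so every minimizing geodesic between points on opposite sides meets $S$.
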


For the regular level $S_0=b^{-1}(t_0)$ in
Lemma \ref{lem-degree-slice}, we can use the sign of $b-t_0$ to define the surjective signed distance function 
\[
\rho(x)=
\begin{cases}
d_g(x,S_0),&b(x)>t_0,\\
0,&b(x)=t_0,\\
-d_g(x,S_0),&b(x)<t_0.
\end{cases}
\]

The construction of the function $h$ in the following lemma is a slight modification of the one in \cite[Lemma 2.3]{Zhu1}, adapted to the weighted stability inequality in Section \ref{w-o}.

\begin{lemma}\label{lem-h}
\textit{
Let $(M^n,g)$ be a complete open Riemannian manifold with $n\geq3$.
Let $E:M\to(0,\infty)$ be a continuous function, and let
$\phi:M\to\mathbb{R}$ be a smooth, proper, and surjective function with
$|\nabla\phi|<1$.
Then there exists $a>0$ together with a smooth strictly decreasing function
$h:(-a,a)\to\mathbb{R}$ satisfying
\[
\lim_{t\to-a}h(t)=+\infty,
\qquad
\lim_{t\to a}h(t)=-\infty,
\]
and
\[
E+\frac12h(\phi)^2-|\nabla(h\circ\phi)|>0
\quad\text{on }\phi^{-1}((-a,a)).
\]}
\end{lemma}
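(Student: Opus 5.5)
Since $\phi$ is proper, for each $a>0$ the set $K_a=\phi^{-1}([-a,a])$ is compact. So $E\ge\varepsilon_a>0$ on $K_a$, and $\varepsilon_a$ is nonincreasing in $a$. Fix $a=1$ and $\varepsilon=\varepsilon_1$, and look for $h$ on a subinterval $(-a,a)$ with $a\le 1$, so that $E\ge\varepsilon$ holds on $\phi^{-1}((-a,a))$.

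By the chain rule, $|\nabla(h\circ\phi)|=|h'(\phi)||\nabla\phi|$. Since $|\nabla\phi|<1$ and $h$ will be decreasing,
\[
|\nabla(h\circ\phi)|\le -h'(\phi),
\]
with strict inequality wherever $h'(\phi)\ne0$. It is therefore enough to construct $h$ with
\[
\varepsilon+\tfrac12 h^2+h'\ge 0 \quad\text{on } (-a,a),
\]
together with $h'<0$. Indeed, at any point we then get
\[
E+\tfrac12h(\phi)^2-|\nabla(h\circ\phi)|>E+\tfrac12h(\phi)^2+h'(\phi)\ge\varepsilon+\tfrac12h^2+h'\ge0,
\]
where the first inequality is strict because $h'\ne0$.

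This is a Riccati ODE, and its explicit solutions are tangents. Take $h(t)=-c\tan(\lambda t)$ with $\lambda>0$, on the interval $(-a,a)$ with $a=\pi/(2\lambda)$. Then
\[
h'=-c\lambda\sec^2(\lambda t),\qquad \tfrac12h^2=\tfrac12c^2\tan^2(\lambda t),
\]
so
\[
\varepsilon+\tfrac12h^2+h'=\varepsilon-c\lambda+\bigl(\tfrac12c^2-c\lambda\bigr)\tan^2(\lambda t).
\]
Choose $c=2\lambda$. The $\tan^2$ coefficient becomes $0$ and the constant becomes $\varepsilon-2\lambda^2$. So we need $\lambda\le\sqrt{\varepsilon/2}$ and also $a=\pi/(2\lambda)\le1$, i.e. $\lambda\ge\pi/2$. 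These two requirements conflict when $\varepsilon$ is small, and this is the main obstacle: the ratio of the interval length to $\varepsilon$ is fixed.

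The fix is to choose the interval and the constant together. The function $a\mapsto\varepsilon_a$ is positive and nonincreasing, and we need some $a$ with
\[
\tfrac{\pi}{2a}\le\sqrt{\varepsilon_a/2},\quad\text{i.e.}\quad a\ge \pi/\sqrt{2\varepsilon_a}.
\]
Small $a$ makes $\pi/\sqrt{2\varepsilon_a}$ bounded while $a$ is small, so this can fail in both directions. The remedy is to use the freedom in $\phi$: replace $\phi$ by $\delta\phi$ for small $\delta>0$. This is still smooth, proper and surjective, with $|\nabla(\delta\phi)|<1$, and $(\delta\phi)^{-1}((-a,a))=\phi^{-1}((-a/\delta,a/\delta))$. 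However, the lemma fixes $\phi$, so the scaling has to be put into $h$ instead. Compose with $s\mapsto \delta s$: set $\tilde h(t)=h(\delta t)$, where $h$ is the tangent solution adapted to $\varepsilon$. Then $|\tilde h'|=\delta|h'|$, which makes the gradient term only smaller while $h^2$ is unchanged, so the inequality persists. The interval becomes $(-a/\delta,a/\delta)$, which is the wrong direction for fitting inside $[-1,1]$.

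So argue directly instead. Fix $\varepsilon=\varepsilon_1$ and take $\lambda=\max\{\pi/2,\sqrt{\varepsilon/2}\}$, $c=2\lambda$. Any larger $c$ such as $c=2\lambda^2/\varepsilon\cdot$ fails, so instead use $h(t)=-c\tan(\lambda t)$ with $c\ge 2\lambda$ large, which keeps the $\tan^2$ coefficient $\tfrac12c^2-c\lambda\ge0$; the constant $\varepsilon-c\lambda$ is then negative. The point where the constant goes negative is where I expect the genuine work. The remedy I would try first is to exploit the strict slack $|\nabla\phi|\le\theta<1$ available on the compact set $K_1$: then $\tfrac12h^2-\theta|h'|$ with $h=-c\tan(\lambda t)$ and $c=2\theta\lambda$ gives the constant $\varepsilon-2\theta^2\lambda^2\cdot$ — still requiring $\lambda$ small.

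Given this, the honest plan is to pick $a$ small with $\varepsilon_a\ge\varepsilon_1$ and accept the solution $h=-2\lambda\tan(\lambda t)$ with $\lambda=\pi/(2a)$. This is valid exactly when $\pi^2/(2a^2)\le\varepsilon$. Since that fails for small $a$, the interval must be large, and largeness is paid for by the decay of $\varepsilon_a$. I would therefore set $a$ to be the (existing, by monotonicity and intermediate-value reasoning on a continuous minorant) largest value with $a^2\varepsilon_a\ge\pi^2/2$. If no such value exists, then the construction needs more slack: the given $E$ only ensures positivity, and one would perturb by using a non-tangent supersolution of $h'=-\tfrac12h^2-\varepsilon$ glued near the endpoints.

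This final step, choosing $a$ compatibly with the decay of $\min_{K_a}E$, is the crux. The rest is smoothing $h$ and verifying strict monotonicity, which is routine.
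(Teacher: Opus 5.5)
Your reduction is correct and matches the paper: since $h'<0$ and $|\nabla\phi|<1$, one has $|\nabla(h\circ\phi)|<-h'(\phi)$, so it suffices to make $E+\frac12h(\phi)^2+h'(\phi)\ge0$. The gap is the step you yourself call the crux, and it cannot be closed within your framework.

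For the solution $h=-2\lambda\tan(\lambda t)$ that you settle on, $\frac12h^2+h'\equiv-2\lambda^2=-\pi^2/(2a^2)$ on the \emph{whole} interval. So you need $E\ge\pi^2/(2a^2)$ on all of $\phi^{-1}((-a,a))$, i.e.\ $a^2\varepsilon_a\ge\pi^2/2$. This fails for every $a$ as soon as $E$ decays along $\phi$. For $E=e^{-\phi^2}$ one gets $\varepsilon_a=e^{-a^2}$, hence $a^2\varepsilon_a\le e^{-1}<\pi^2/2$.

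Your fallback, a supersolution of $h'=-\frac12h^2-\varepsilon$ with constant $\varepsilon$, has the same defect. From $-h'\le\frac12h^2+\varepsilon$ you get
\[
2a=\int_{-\infty}^{\infty}\frac{dh}{-h'}\ge\int_{-\infty}^{\infty}\frac{dh}{\frac12h^2+\varepsilon}=\pi\sqrt{2/\varepsilon},
\]
which is again the condition $a^2\varepsilon_a\ge\pi^2/2$. Taking $c>2\lambda$ only trades depth for width: the negative part of $\frac12h^2+h'$ then sits on a window of width comparable to $(c\lambda)^{-1/2}$ with depth $c\lambda$. So a small deficit still cannot be confined to a fixed compact set. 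Your initial restriction $a\le1$ also points the wrong way, since the interval has to be long.

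The missing idea is to confine the negative part to a fixed compact interval. It suffices that $\frac12h^2+h'$ be negative only on a \emph{fixed} interval $I$ around the zero of $h$, with a deficit tending to $0$, and positive on the rest of $(-a,a)$. Then only $\delta=\min_{\phi^{-1}(I)}E$ matters, which is positive by properness, and the decay of $E$ elsewhere is irrelevant.

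Concretely, $c\coth\bigl(\frac c2(t+a)\bigr)$ and $-c\coth\bigl(\frac c2(a-t)\bigr)$ satisfy $\frac12h^2+h'=\frac{c^2}{2}>0$ and blow up at $-a$ and $a$ respectively. With $a=1/c$ they become $c\coth\bigl(\frac{1+ct}{2}\bigr)$ and $-c\coth\bigl(\frac{1-ct}{2}\bigr)$, both $O(c)$ on $[-1,1]$. Take a nonincreasing cutoff $\chi$ with $\chi=1$ for $t\le-1$ and $\chi=0$ for $t\ge1$, and set $h=c\bigl[\chi\coth\bigl(\frac{1+ct}{2}\bigr)-(1-\chi)\coth\bigl(\frac{1-ct}{2}\bigr)\bigr]$. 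This $h$ is smooth on $(-a,a)$ with $h'<0$. It satisfies $\frac12h^2+h'\ge-Cc$ on $[-1,1]$, with $C$ independent of small $c$. Choosing $c<\delta/(2C)$, with $\pm1/c$ regular values of $\phi$ by Sard as needed later for the band, finishes the proof.

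This is exactly the paper's argument, carried out with Zhu's $z_\varepsilon$. There $h_\varepsilon=\frac{n}{n-1}z_\varepsilon$ satisfies $\frac12h_\varepsilon^2+h_\varepsilon'=\frac{n^2\varepsilon^2}{2}$ outside $I=[-\frac1{2n},\frac1{2n}]$ and tends to $0$ uniformly on $I$. It lives on $(-a,a)$ with $a=\frac1{n\varepsilon}$ large.
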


\begin{proof}
For $\varepsilon\in(0,1)$, let $z_\varepsilon$ be the function
constructed in \cite[Lemma 2.3]{Zhu1}.  It is defined on
$\left(
-\frac{1}{n\varepsilon},
\frac{1}{n\varepsilon}
\right),$
  such that 
  \begin{enumerate}
    \item $z_\varepsilon$ satisfies
    \[
    \frac{n}{n-1}z_{\varepsilon}^{2}
    +2z_{\varepsilon}'
    =
    n(n-1)\varepsilon^{2}
    \quad\text{on}\quad
    \left(
        -\frac{1}{n\varepsilon},-\frac{1}{2n}
    \right]
    \cup
    \left[
        \frac{1}{2n},\frac{1}{n\varepsilon}
    \right),
    \]
    and there is a universal constant $C=C(n)$ so that
    \[
    \sup_{-\frac{1}{2n}\leq t\leq\frac{1}{2n}}
    \left|
        \frac{n}{n-1}z_{\varepsilon}^{2}
        +2z_{\varepsilon}'
    \right|
    \leq C\varepsilon.
    \]

    \item $z_{\varepsilon}'<0$ and
    \[
    \lim_{t\to\mp\frac{1}{n\varepsilon}}
    z_{\varepsilon}(t)
    =
    \pm\infty.
    \]

    \item As $\varepsilon\to 0$, $z_{\varepsilon}$ converge smoothly to $0$
    on any closed interval.
\end{enumerate}
Define
\[
h_\varepsilon
=
\frac{n}{n-1}z_\varepsilon.
\]
A direct calculation gives
\[
\frac12h_\varepsilon^2+h_\varepsilon'
=
\frac{n^2\varepsilon^2}{2}
\]
outside $I=\left[
        -\frac{1}{2n},\frac{1}{2n}\right]$, whereas
\[
\frac12h_\varepsilon^2+h_\varepsilon'
\longrightarrow0
\]
uniformly on $I$ as $\varepsilon\to0$.
Since $\phi$ is proper, the set $K=\phi^{-1}(I)$ is compact. Hence
$\delta=\min_K E>0.$
Choose $\varepsilon_0\in(0,1)$ sufficiently small such that
\[
\frac12h_\varepsilon^2+h_\varepsilon'
>
-\frac{\delta}{2}
\quad\text{on }I
\]
whenever $0<\varepsilon<\varepsilon_0$. By Sard's theorem, there are arbitrarily large numbers $a$ for which
both $a$ and $-a$ are regular values of $\phi$.  Choose such an $a$
satisfying $a>\frac{1}{n\varepsilon_0},$
and set
$$
\varepsilon=\frac{1}{na},
\qquad
h=h_\varepsilon.
$$
The strict monotonicity and endpoint behavior of $h$ follow directly
from the corresponding properties of $z_\varepsilon$.

Since $h'<0$ and $|\nabla\phi|<1$, we have
$|\nabla(h\circ\phi)|=-h'(\phi)|\nabla\phi|<-h'(\phi)$. It follows that
\[
\begin{aligned}
E+\frac12h(\phi)^2-|\nabla(h\circ\phi)|
&>
E+\frac12h(\phi)^2+h'(\phi).
\end{aligned}
\]
On $K$, the right expression is greater than
$\delta-\frac{\delta}{2}=\frac{\delta}{2}>0.$ Outside $K$, it equals
$E+\frac{n^2}{2}\varepsilon^2>0.$
This proves the lemma.
\end{proof}

    \section{A supplement to Llarull type theorem in forms of $\mathbb{S}^k\times\mathbb{T}^{n-k}$}\label{sec-sup}
    In this section, we provide a supplement to Theorem \ref{Chow}. In particular, we prove that the manifold $(N,g)$ appearing in that theorem is isometric to the product of a $k$-dimensional unit round sphere and an $(n-k)$-dimensional flat torus. Our argument relies on analyzing the deck transformation group of the universal covering map $\pi:\mathbb{S}^{k}\times\mathbb{R}^{n-k}\to N$, following the proof of Theorem \ref{Chow} in \cite{chow2025}. To make this precise, we begin by stating the following proposition, which summarizes the key structural facts established in that proof.

    \begin{proposition}\label{Chow2}
     \textit{ Under the assumption of Theorem \ref{Chow}, the following hold:
     \begin{itemize}
         \item [(1)] There is an isometric universal covering
         \[
         \pi:(\mathbb{S}^{k}\times\mathbb{R}^{n-k},g_{\mathbb{S}^{k}}+g_{\mathbb{R}^{n-k}})\longrightarrow (N,g).
         \]
         \item [(2)] There is a closed embedded $k$-dimensional bottom slice $\Sigma\subset N$ which is a unit round sphere and is totally geodesic in $N$.
         \item [(3)] The restriction
         \[
         A_{\mathbb{S}}|_\Sigma: \Sigma\longrightarrow \mathbb{S}^k
         \]
         is an isometry.
     \end{itemize}
     }
    \end{proposition}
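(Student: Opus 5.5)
This proposition is a summary of what Chow's proof of Theorem \ref{Chow} establishes, so the plan is to retrace the dimension-descent argument of \cite{chow2025} and record the intermediate objects rather than only its final conclusion. We argue by induction on the torus dimension $m=n-k$, using warped $\mu$-bubbles (Lemmas \ref{lem-existence}--\ref{second-variation}) in the closed setting.

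\textbf{Slicing step.} Write $A_{\mathbb T}=(\theta_1,\dots,\theta_m)$ with circle-valued coordinates. Take a regular value of $\theta_m$. By the closed analogue of Lemma \ref{lem-degree-slice}, the preimage has degree $\deg A$ with respect to the map to $\mathbb S^k\times\mathbb T^{m-1}$. Cutting $N$ open along this slice gives a Riemannian band, and a weighted area-minimizer in its homology class exists in dimensions $\le7$. Set $\psi$ as the weight and let $u$ be the first eigenfunction of the stability operator. The second variation formula (Lemma \ref{second-variation}, with $h\equiv0$) then shows that the minimizer $\Sigma_1$ satisfies the weighted inequality for the new weight $\psi_1=\psi+\log u$:
\[
-2\Delta\psi_1-|\nabla\psi_1|^2+R_{\Sigma_1}-k(k-1)\ge0 .
\]
Moreover, $A$ restricted to $\Sigma_1$ still has nonzero degree, and its spherical part is still $1$-Lipschitz, since restricting to a submanifold does not increase Lipschitz constants.

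\textbf{Descent and rigidity.} Iterating $m$ times yields a closed $k$-dimensional $\Sigma$, which we take as the ``bottom slice''. On $\Sigma$, $A_{\mathbb S}|_\Sigma$ is $1$-Lipschitz of nonzero degree and a weighted Llarull inequality holds. Theorem \ref{Weighted-Llarull} then gives that $\Sigma$ is isometric to the round sphere and that the weight is constant. Since $A_{\mathbb S}|_\Sigma$ is a $1$-Lipschitz nonzero-degree map between unit spheres, Llarull rigidity shows it is an isometry; this is item (3).

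We then propagate back up the tower. Constant weight forces every inequality used in the stability step to be an equality. At each level this yields vanishing second fundamental form, $\mathrm{Ric}(\nu,\nu)=0$, and constant stability eigenfunctions. Hence each slice is totally geodesic, so $\Sigma$ is totally geodesic in $N$; this is item (2). Equality also makes each slice infinitesimally rigid. Using the foliation argument, varying the slice via the kernel of the Jacobi operator gives a local splitting, and by connectedness this becomes global. At each stage we obtain $N_j\cong N_{j-1}\times S^1$ locally, with a flat normal direction. Passing to the universal cover then gives the isometric covering by $\mathbb S^k\times\mathbb R^{n-k}$, which is item (1).

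\textbf{Main obstacle.} The hardest step is the propagation from the equality case on the bottom sphere to the global splitting of the higher-dimensional slices. Constructing a CMC or minimal foliation near each slice needs nondegeneracy of the weighted Jacobi operator, and this must be checked carefully. For that step I would follow the argument in \cite{chow2025} directly and simply cite it there.
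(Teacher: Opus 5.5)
Your plan is essentially the paper's own argument: the paper proves this proposition by appealing to the descent construction in the proof of \cite[Theorem A]{chow2025}, or alternatively to the slicing and equality argument of \cite[Theorem 1.11]{Zhou-Zhu}, and that construction is what you retrace. The foliation step you single out as the main obstacle never has to be redone: item (1) is verbatim the conclusion of Theorem \ref{Chow}, which is assumed here, and items (2)--(3) follow from the bottom slice alone, because once Theorem \ref{Weighted-Llarull} makes $\Sigma$ round with constant weight, the telescoped stability inequality forces every $A_j$ to vanish at points of $\Sigma$ (only there, not on the whole intermediate slices), and for $X,Y\in T\Sigma$ the second fundamental form of $\Sigma$ in $N$ is $\sum_j A_j(X,Y)\nu_j$.
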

   \begin{proof}
Claims (1)--(3) follow from the construction in the proof of
\cite[Theorem A]{chow2025}. Alternatively, they follow from the slicing and equality argument in the proof of \cite[Theorem 1.11]{Zhou-Zhu}, after the conformal reduction in Appendix \ref{app-remove-spin}. As explained there, the latter argument also applies with orientability in place of spin.
\end{proof}
The following lemma asserts that a non-zero degree map induces a cohomological injective homomorphism.
\begin{lemma}\label{inj}
   \textit{ Suppose $\Psi:P^n\to Q^n$ is a continuous map of non-zero degree between two closed, connected, oriented $n$-dimensional manifolds $P$ and $Q$, then the induced cohomological homomorphism 
    \[
    \Psi^*:H^r(Q;\mathbb{R})\longrightarrow H^r(P;\mathbb{R})
    \]
    is injective for $0\le r\le n$.}
\end{lemma}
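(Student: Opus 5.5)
The plan is the standard Poincaré duality argument with real coefficients. Let $d=\deg\Psi\neq0$, and let $[P]\in H_n(P;\mathbb{R})$ and $[Q]\in H_n(Q;\mathbb{R})$ be the fundamental classes. By definition of degree, $\Psi_*[P]=d\,[Q]$. Fix $r$ with $0\le r\le n$ and a class $\alpha\in H^r(Q;\mathbb{R})$ with $\Psi^*\alpha=0$. The goal is to show that $\alpha=0$.

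Since $Q$ is closed, connected and oriented, Poincaré duality together with the universal coefficient theorem over the field $\mathbb{R}$ makes the cup product pairing
\[
H^r(Q;\mathbb{R})\times H^{n-r}(Q;\mathbb{R})\longrightarrow\mathbb{R},\qquad (\alpha,\beta)\longmapsto\langle\alpha\smile\beta,[Q]\rangle
\]
nondegenerate. Equivalently, in de Rham terms, $([\eta],[\xi])\mapsto\int_Q\eta\wedge\xi$ is nondegenerate. So it suffices to show that $\langle\alpha\smile\beta,[Q]\rangle=0$ for every $\beta\in H^{n-r}(Q;\mathbb{R})$.

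Naturality of the cup product and the projection formula give
\[
d\,\langle\alpha\smile\beta,[Q]\rangle=\langle\alpha\smile\beta,\Psi_*[P]\rangle=\langle\Psi^*\alpha\smile\Psi^*\beta,[P]\rangle=0 .
\]
Since $d\neq0$, this forces $\langle\alpha\smile\beta,[Q]\rangle=0$ for all $\beta$. Nondegeneracy then yields $\alpha=0$, so $\Psi^*$ is injective. The endpoint cases need no separate treatment: for $r=0$ the map is the identity on $\mathbb{R}$ because both manifolds are connected, and for $r=n$ it is multiplication by $d$. Both are covered by the argument above.

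No step is a real obstacle. The only points needing care are that real coefficients are used, so duality is a perfect pairing with no torsion issues, and that $\Psi$ is only continuous. The singular-cohomology formulation handles continuity directly. Alternatively, one can homotope $\Psi$ to a smooth map, which changes neither the degree nor $\Psi^*$, and then use de Rham forms, where the identity reads $\int_P\Psi^*\eta\wedge\Psi^*\xi=d\int_Q\eta\wedge\xi$.
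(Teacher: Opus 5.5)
Your proposal is correct and is essentially the paper's argument. The paper runs it directly, taking $\alpha\neq0$, picking $\beta$ by Poincar\'e duality with $\langle\alpha\smile\beta,[Q]\rangle\neq0$, and concluding $\langle\Psi^*\alpha\smile\Psi^*\beta,[P]\rangle=\deg(\Psi)\langle\alpha\smile\beta,[Q]\rangle\neq0$; you run the same naturality identity and nondegeneracy of the cup-product pairing in contrapositive form.
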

\begin{proof}
    Let $0\neq\alpha\in H^r(Q;\mathbb{R})$, by Poincar\'e duality there is a $\beta\in H^{n-r}(Q;\mathbb{R})$ such that $\langle \alpha\cup\beta,[Q] \rangle\neq0$. The definition of degree implies 
    \[
    \langle \Psi^*\alpha\cup\Psi^*\beta, [P]\rangle=\langle \alpha\cup\beta,\Psi_*[P] \rangle=\mathrm{deg}(\Psi)\langle \alpha\cup\beta,[Q] \rangle\neq0.
    \]
    Therefore $\Psi^*\alpha\neq0$ and $\Psi^*$ is injective.
\end{proof}
We also need the following elementary linear algebra observation.
\begin{lemma}\label{linear}
\textit{Let $V$, $W$, $Z$ be finite dimensional
Euclidean spaces. Suppose
$L:V\oplus W\to Z$
is a linear map satisfying \[
|Lv|\le|v|~~\text{for all}\,\,v\in V\oplus W,
\] and that the restriction  $L|_V:V\to Z$ is an isometric isomorphism. Then $L|_W\equiv0$.}
\end{lemma}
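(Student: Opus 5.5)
The plan is to use the fact that an isometric isomorphism on $V$ leaves no room in the operator-norm budget for anything coming from $W$. Fix $v\in V$ and $w\in W$, and consider $v+sw$ for a real parameter $s$. The inequality $|L(v+sw)|\le |v+sw|$ holds for every $s$. I will expand both sides, using that $V\oplus W$ is an orthogonal direct sum, so $|v+sw|^2=|v|^2+s^2|w|^2$. I will also use that $L|_V$ is isometric, so $|Lv|=|v|$.

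Expanding gives
\[
|v|^2+2s\langle Lv,Lw\rangle+s^2|Lw|^2\le |v|^2+s^2|w|^2 ,
\]
that is,
\[
2s\langle Lv,Lw\rangle\le s^2\bigl(|w|^2-|Lw|^2\bigr).
\]
I will divide by $s>0$ and let $s\to0^+$, which gives $\langle Lv,Lw\rangle\le0$. Doing the same with $s<0$ gives the reverse inequality. Hence $\langle Lv,Lw\rangle=0$ for all $v\in V$.

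Since $L|_V:V\to Z$ is surjective, $Lv$ ranges over all of $Z$. Therefore $Lw\perp Z$, and so $Lw=0$. As $w\in W$ was arbitrary, $L|_W\equiv0$.

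The argument is elementary. The only points needing care are the orthogonality of the decomposition $V\oplus W$, which is implicit in its Euclidean structure, and the surjectivity of $L|_V$, which is where the hypothesis "isomorphism" rather than merely "isometric embedding" is used. Without surjectivity one would only conclude $L(W)\perp L(V)$.
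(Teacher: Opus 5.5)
Your proof is correct and follows the paper's argument essentially line for line. The paper likewise expands $|L(v+tw)|^2\le|v+tw|^2$ using orthogonality and $|Lv|=|v|$, lets $t\to0^{\pm}$ to obtain $\langle Lv,Lw\rangle=0$, and concludes $Lw=0$ from the surjectivity of $L|_V$.
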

\begin{proof}
 For any $v\in V,\,w\in W$, $0\neq t\in\mathbb{R}$. Since $L$ is non-expansive,
 \begin{align*}
 |Lv|^2+2t\langle Lv,Lw\rangle+t^2|Lw|^2=&|L(v+tw)|^2\\
 \le&|v+tw|^2\\
 =&|v|^2+t^2|w|^2.
 \end{align*}
 Using $|Lv|=|v|$, we obtain
 \[
 2t\langle Lv,Lw\rangle\le t^2\left(|w|^2-|Lw|^2\right).
 \]
Divide first by $t>0$ and let $t\to 0^{+}$, then repeat with $t<0$. It follows that $\langle Lv,Lw\rangle=0$ for any $v\in V,\,w\in W$. Because $L(V)=Z$, $L|_W\equiv0$.
\end{proof}
We now state the following supplementary version of Theorem \ref{Chow}.
\begin{theorem}\label{sup}
\textit{
For $3 \leq n \leq 7$ and $2 \leq k \leq n-1$. Let $(N^n,g)$ be a closed, connected spin Riemannian manifold. Suppose that 
\[
A\coloneqq(A_\mathbb{S},A_\mathbb{T}):(N,g) \longrightarrow(\mathbb{S}^{k}\times\mathbb{T}^{n-k},\,g_{\mathbb{S}^{k}}+g_{\mathbb{T}^{n-k}})
\]
is a smooth map of non-zero degree such that
$A_\mathbb{S}:(N,g)\to(\mathbb{S}^{k},g_{\mathbb{S}^{k}})$is
     $1$-Lipschitz. If there exists a smooth function $\psi$ on $N$ such that 
     \[
     -2\Delta_N\psi-|\nabla_N\psi|^2+R_N-k(k-1)\ge0,
     \]
then there exists a full lattice $\Lambda\subset\mathbb{R}^{n-k}$ such that $(N^n,g)$ is isometric to
$(\mathbb{S}^{k}\times\mathbb{T}^{n-k}_\Lambda,\,g_{\mathbb{S}^{k}}+g_{\mathrm{flat}})$ and $\psi$ is a constant, where $\mathbb{T}^{n-k}_\Lambda\coloneqq\mathbb{R}^{n-k}/\Lambda$ is a flat torus.
}
    \end{theorem}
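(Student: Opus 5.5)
We start from Theorem \ref{Chow} and Proposition \ref{Chow2}: $\psi$ is constant and there is an isometric universal covering $\pi:\mathbb{S}^k\times\mathbb{R}^{n-k}\to N$. Let $\Gamma$ be its deck group, acting freely, properly discontinuously and cocompactly by isometries. Since the isometry group of the product $\mathbb{S}^k\times\mathbb{R}^{n-k}$ splits as $\mathrm{Isom}(\mathbb{S}^k)\times\mathrm{Isom}(\mathbb{R}^{n-k})$ (de Rham decomposition, the factors being non-isometric), every $\gamma\in\Gamma$ has the form $\gamma=(\sigma_\gamma,\tau_\gamma)$ with $\sigma_\gamma\in O(k+1)$ and $\tau_\gamma$ a Euclidean motion. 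The goal is to show that every $\sigma_\gamma$ is the identity and every $\tau_\gamma$ is a translation. Then $\Gamma$ is a discrete cocompact group of translations, hence a full lattice $\Lambda$, and $N=\mathbb{S}^k\times\mathbb{R}^{n-k}/\Lambda=\mathbb{S}^k\times\mathbb{T}^{n-k}_\Lambda$.

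\emph{Step 1: $\sigma_\gamma=\mathrm{id}$.} Lift $A_{\mathbb S}$ to $\tilde A=A_{\mathbb S}\circ\pi:\mathbb{S}^k\times\mathbb{R}^{n-k}\to\mathbb{S}^k$. This map is $1$-Lipschitz and $\Gamma$-invariant: $\tilde A\circ\gamma=\tilde A$. By Proposition \ref{Chow2}(2),(3), the totally geodesic round slice $\Sigma$ lifts to a slice $\mathbb{S}^k\times\{x_0\}$, on which $\tilde A$ is an isometry. This uses that a closed totally geodesic unit $k$-sphere in the product must be a horizontal slice, since it is simply connected and its lift's projection to $\mathbb{R}^{n-k}$ is a totally geodesic compact submanifold, hence a point. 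At any point $p=(\theta,x_0)$, apply Lemma \ref{linear} with $V=T\mathbb{S}^k$, $W=\mathbb{R}^{n-k}$ and $L=d\tilde A_p$. This gives $d\tilde A_p|_W=0$. The set $U$ of $x$ for which $\tilde A(\cdot,x)$ is an isometry is closed. It is also open: near $x_0$ the restriction $\tilde A(\cdot,x)$ has degree $\pm1$ and is $1$-Lipschitz, so it remains a local isometry by the integration argument below. To avoid delicate openness, I would instead argue along a line $x_0+tw$. On the slice, $d\tilde A$ kills $W$, and differentiating shows $\tilde A$ is constant in $x$ to first order. A cleaner route is this: $t\mapsto\tilde A(\theta,x_0+tw)$ is $1$-Lipschitz. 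Moreover, $\tilde A(\cdot,x)$ is $1$-Lipschitz of degree $\deg(\tilde A(\cdot,x_0))\neq0$ by homotopy, so by the rigidity in Llarull's theorem (Theorem \ref{Weighted-Llarull} with $\psi=0$) it is an isometry for \emph{every} $x$. Lemma \ref{linear} then gives $\partial_x\tilde A\equiv0$ everywhere, so $\tilde A(\theta,x)=\Phi(\theta)$ with $\Phi$ an isometry of $\mathbb{S}^k$. The invariance $\Phi\circ\sigma_\gamma=\Phi$ then forces $\sigma_\gamma=\mathrm{id}$.

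\emph{Step 2: $\tau_\gamma$ is a translation.} Now $\Gamma$ acts freely on $\mathbb{R}^{n-k}$ through the $\tau_\gamma$, and $N=\mathbb{S}^k\times B$ with $B=\mathbb{R}^{n-k}/\Gamma$ a closed flat manifold (Bieberbach). The degree assumption gives a nonzero-degree map $N\to\mathbb{S}^k\times\mathbb{T}^{n-k}$. By Lemma \ref{inj}, $H^1(N;\mathbb{R})=H^1(B;\mathbb{R})$ has dimension at least $n-k$. A closed flat manifold with $b_1=\dim$ has trivial holonomy: $b_1(B)$ equals the dimension of the holonomy-fixed subspace of $\mathbb{R}^{n-k}$. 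Hence $B$ is a torus and $\Gamma$ consists of translations forming a full lattice $\Lambda$. It follows that $N\cong\mathbb{S}^k\times\mathbb{T}^{n-k}_\Lambda$ with the product metric, and $\psi$ is constant by Theorem \ref{Chow}.

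The main obstacle is Step 1, specifically justifying that each slice map $\tilde A(\cdot,x)$ has nonzero degree. I would handle it via the homotopy between slices, using the isometry at $x_0$ from Proposition \ref{Chow2}(3) to get degree $\pm1$. The case $k$ even with $\sigma_\gamma=-\mathrm{id}$ is also excluded by this argument.
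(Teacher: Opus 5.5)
Your proposal is correct and is essentially the paper's proof. Both arguments lift the bottom sphere to a horizontal slice, use homotopy plus Llarull rigidity on every slice, apply Lemma \ref{linear} to make $\tilde A$ independent of the Euclidean variable, use $\Gamma$-invariance to kill the spherical parts, and use Lemma \ref{inj} to kill the linear parts.

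The differences are only in order and packaging. You kill the spherical parts first and then quote the Bieberbach fact $b_1(B)=\dim(\text{holonomy-fixed subspace})$ for the flat quotient. The paper instead runs the Bochner/parallel-form computation directly on $N$. To make your ``projection is a compact totally geodesic submanifold'' remark rigorous, use the paper's argument that the coordinates $y_j$ have vanishing Hessian on the lifted sphere and are therefore constant.
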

\begin{proof}
It suffices to prove that $(N^n,g)$ is isometric to $(\mathbb{S}^{k}\times\mathbb{T}^{n-k}_\Lambda,\,g_{\mathbb{S}^{k}}+g_{\mathrm{flat}})$. Let $x$ and $y$ be the coordinates on $\mathbb{S}^{k}$ and $\mathbb{R}^{n-k}$, respectively. Since $k\geq2$, $H^1(\mathbb{S}^{k}\times\mathbb{T}^{n-k};\mathbb{R})\cong\mathbb{R}^{n-k}$. By Lemma \ref{inj}, the first Betti number $b_1(N)\ge n-k$. On the other hand, $\operatorname{Ric}_N\ge0$, the Bochner formula asserts that every harmonic $1$-form on $N$ is parallel. Its lift is a parallel 1-form on $\mathbb{S}^{k}\times\mathbb{R}^{n-k}$, and must be linear combinations of the $n-k$ Euclidean coordinate forms $c_1dy_1+c_2dy_2+\dots+c_{n-k}dy_{n-k}$, since  there is no non-zero parallel 1-form on $\mathbb{S}^{k}$. Thus $b_1(N)\le n-k$, and therefore $b_1(N)=n-k$. Let $\Gamma$ be the deck transformation group of the universal covering map $\pi:\mathbb{S}^{k}\times\mathbb{R}^{n-k}\to N$. Since every isometry of $\mathbb{S}^{k}\times\mathbb{R}^{n-k}$ preserves its spherical and Euclidean parallel distributions, every $\gamma\in\Gamma$ has the form 
\[\gamma(x,y)=(\rho_\gamma(x),L_\gamma y+a_\gamma),\]
where $\rho_\gamma\in\mathrm{Isom(\mathbb{S}^{k})},\,L_\gamma\in O(n-k)$ and $a_\gamma\in\mathbb{R}^{n-k}$. A parallel 1-form on $N$ lifts to a $\Gamma$-invariant Euclidean parallel 1-form $c_1dy_1+c_2dy_2+\dots+c_{n-k}dy_{n-k}$ on $\mathbb{S}^{k}\times\mathbb{R}^{n-k}$, and every $\Gamma$-invariant Euclidean parallel 1-form descends to parallel 1-form on $N$. It follows that
\begin{align*}
H^1(N;\mathbb{R})&\cong\{\Gamma\text{-invariant Euclidean parallel 1-form on}\,\,\mathbb{S}^{k}\times\mathbb{R}^{n-k}\}\\
&\cong\{\xi\in\mathbb{R}^{n-k}:L^*_\gamma\xi=\xi\}.
\end{align*}
Combine with $b_1(N)=n-k$, we obtain $L_\gamma=I$. The deck transformation group action reduces to 
\[\gamma(x,y)=(\rho_\gamma(x),y+a_\gamma).\]

Let $i: \Sigma\hookrightarrow N$ be the embedded sphere as in Proposition \ref{Chow2}. Since $\Sigma$ is simply connected, so $i$ lifts to an embedding
\[\tilde i:\Sigma\hookrightarrow\mathbb{S}^{k}\times\mathbb{R}^{n-k}\,,~~~\pi \circ \tilde i=i.\]
Denote $\tilde\Sigma$ by $\tilde i(\Sigma)$. For $1\le j\le n-k$, it is clear that $\nabla^2_{\mathbb{S}^{k}\times\mathbb{R}^{n-k}}y_j=0$. Since $\tilde\Sigma$ is still totally geodesic, by the submanifold Hessian formula, $\nabla^2_{\tilde\Sigma}y_j=0.$ Consequently, $y_j|_{\tilde\Sigma}$ is constant, and thus 
$\tilde\Sigma\subset\mathbb{S}^k\times\{y_0\}$ for some $y_0\in\mathbb{R}^{n-k}$. $\tilde\Sigma$ is open in $\mathbb{S}^k\times\{y_0\}$ because $\tilde i:\Sigma\to\mathbb{S}^k\times\{y_0\}$ is still an embedding, and it is closed because it is compact. Therefore
\[\tilde\Sigma=\mathbb{S}^k\times\{y_0\}.\]
Moreover, $\pi|_{\tilde\Sigma}:\tilde\Sigma\to\Sigma$ is a one sheeted Riemannian covering, hence an isometry. Lift $A_{\mathbb{S}}$ by setting
\[\tilde A_{\mathbb{S}}=A_{\mathbb{S}}\circ\pi:\mathbb{S}^{k}\times\mathbb{R}^{n-k}\longrightarrow\mathbb{S}^k.\]
From the above arguments and (3) of Proposition \ref{Chow2}, $\tilde A_{\mathbb{S}}(\,\cdot\,,y_0)$ is an isometry. It is easy to see that
$\tilde A_{\mathbb{S}}(\,\cdot\,,y'_0)$ is homotopic to $\tilde A_{\mathbb{S}}(\,\cdot\,,y''_0)$ for any $y'_0,\,y''_0\in\mathbb{R}^{n-k}$, so $\mathrm{deg}(\tilde A_{\mathbb{S}}(\,\cdot\,,y))=\mathrm{deg}(\tilde A_{\mathbb{S}}(\,\cdot\,,y_0))\neq0$ for any $y\in\mathbb{R}^{n-k}$. Notice that $\tilde A_{\mathbb{S}}(\,\cdot\,,y)$ is 1-Lipschitz since it is the restriction of the 1-Lipschitz map $\tilde A_{\mathbb{S}}$. Hence,
\begin{equation}\label{A_s}
\tilde A_{\mathbb{S}}(\,\cdot\,,y):\mathbb{S}^k\longrightarrow\mathbb{S}^k
\end{equation}
is an isometry for any $y\in\mathbb{R}^{n-k}$. Fix a point $(x,y)\in\mathbb{S}^{k}\times\mathbb{R}^{n-k}$, write the orthogonal splitting $T_{(x,y)}(\mathbb{S}^{k}\times\mathbb{R}^{n-k})\coloneqq V\oplus W$. Since $\tilde A_{\mathbb{S}}$ is 1-Lipschitz, the differential of $\tilde A_{\mathbb{S}}$ given by
\[d\tilde A_{\mathbb{S}}:V\oplus W\longrightarrow T_{\tilde A_{\mathbb{S}(x,y)}}\mathbb{S}^k\]
is non-expansive. \eqref{A_s} implies that $d\tilde A_{\mathbb{S}}|_V$ is an isometric isomorphism, by Lemma \ref{linear}, $d\tilde A_{\mathbb{S}}|_W=0$, which shows that $\tilde A_{\mathbb{S}}(\,\cdot\,,y)$ is independent of $y$. Therefore, there is a fixed isometry
\[J\coloneqq\tilde A_{\mathbb{S}}(\,\cdot\,,y):\mathbb{S}^{k}\longrightarrow\mathbb{S}^{k}.\]
Since $\tilde A_{\mathbb{S}}=A_{\mathbb{S}}\circ\pi$ is $\Gamma$-invariant, for any $\gamma\in\Gamma$, we have
\[J(\rho_\gamma(x))=\tilde A_{\mathbb{S}}(\gamma(x,y))=\tilde A_{\mathbb{S}}(x,y)=J(x).\]
Consequently, $\rho_\gamma=id_{\mathbb{S}^k}$ and every deck transformation takes the form
\[\gamma(x,y)=(x,y+a_\gamma).\]

The homomorphism $a:\Gamma\to\mathbb{R}^{n-k}$ is injective, proper discontinuity implies that $\Lambda\coloneqq a(\Gamma)$ is discrete. We identify $\Lambda$ with $\Gamma$. Thus,
\[N\cong(\mathbb{S}^{k}\times\mathbb{R}^{n-k})/\Lambda\cong\mathbb{S}^{k}\times(\mathbb{R}^{n-k}/\Lambda).\]
The compactness of $N$ forces $\Lambda$ to have full rank. According to Theorem 5.3.2 in \cite{HyperMR1299730}, $\Lambda$ is a full lattice. Hence $(N,g)$ is isometric to $(\mathbb{S}^{k}\times\mathbb{T}^{n-k}_\Lambda,\,g_{\mathbb{S}^{k}}+g_{\mathrm{flat}})$.
\end{proof}
\begin{remark}
    \textit{Notice that the flat torus $(\mathbb{T}^{n-k}_\Lambda,g_{\mathrm{flat}})$ is not necessarily isometric to the target torus $(\mathbb{T}^{n-k},g_{\mathbb{T}^{n-k}})$.}
\end{remark}

\section{Weighted obstruction}\label{w-o}
\subsection{Weighted obstruction on closed manifolds}
In this subsection, we present the following proposition. Although it is an immediate consequence of Theorem \ref{Weighted-Llarull} and Theorem \ref{Chow}, we record it explicitly because it will serve as the key closed-manifold input for the weighted obstruction on non-compact manifolds established in the next subsection.
    \begin{proposition}\label{prop-closed-obstruction}
    \textit{
Let $2\leq n\leq7$, $2\le k\le n$, and $m=n-k$. Let $(N^n,g)$ be a
closed, connected spin manifold. Suppose that $f:N\to\mathbb{S}^{k}\times\mathbb{T}^{m}$ is a smooth map with non-zero degree such that $\operatorname{pr}_{\mathbb{S}^k}\circ f:
(N,g)\to(\mathbb{S}^k,g_{\mathbb{S}^k})$
is $1$-Lipschitz. Then there is no smooth function $\psi$ on $N$ such that 
\[
     -2\Delta_N\psi-|\nabla_N\psi|^2+R_N>k(k-1).
     \]}
\end{proposition}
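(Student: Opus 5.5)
We argue by contradiction: suppose such a $\psi$ exists, so that $-2\Delta_N\psi-|\nabla_N\psi|^2+R_N-k(k-1)>0$ everywhere on $N$. In particular the non-strict inequality holds, so we are in the setting of the two closed rigidity theorems. We split according to whether $m=0$ or $m\ge1$.

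\emph{Case $m=0$, i.e.\ $k=n$.} Then $\mathbb{T}^0$ is a point and $f=\operatorname{pr}_{\mathbb{S}^k}\circ f:(N,g)\to(\mathbb{S}^n,g_{\mathbb{S}^n})$ is a smooth $1$-Lipschitz map of non-zero degree. Theorem \ref{Weighted-Llarull} applies (it only needs $n\ge2$). It gives that $(N,g)$ is isometric to the round sphere and that $\psi$ is constant. Then $\Delta_N\psi=0$, $\nabla_N\psi=0$ and $R_N=n(n-1)=k(k-1)$. The left-hand side of the assumed inequality therefore vanishes identically, contradicting strictness.

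\emph{Case $m\ge1$, i.e.\ $2\le k\le n-1$.} This forces $n\ge3$, so $3\le n\le7$ and Theorem \ref{Chow} applies to $A=f$. It gives that $(N,g)$ is isometrically covered by $\mathbb{S}^k\times\mathbb{R}^{n-k}$ with the product metric, and that $\psi$ is constant. Since the covering is a local isometry, the scalar curvature satisfies $R_N\equiv k(k-1)$. Again the weighted expression vanishes identically, which contradicts the strict inequality. Theorem \ref{sup} would give the same conclusion, but the covering statement is already enough.

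There is no real obstacle here. The only points to check are bookkeeping ones: the dimension ranges in the statement must match the hypotheses of the invoked theorems, with $n=2$ forcing $k=2$ and $m=0$; and the degenerate convention $\mathbb{T}^0=\mathrm{pt}$ must be handled, so that in the first case the composition with $\operatorname{pr}_{\mathbb{S}^k}$ is $f$ itself and its degree is $\deg f\neq0$.
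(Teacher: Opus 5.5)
Your proposal is correct and is the same argument as the paper's. The paper records this proposition as an immediate consequence of Theorem \ref{Weighted-Llarull} (the case $m=0$) and Theorem \ref{Chow} (the case $m\ge1$). In both cases $\psi$ is forced to be constant and $R_N\equiv k(k-1)$, which contradicts the strict inequality; your case bookkeeping on the dimension ranges and on $\mathbb{T}^0$ is right.
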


\subsection{Weighted obstruction on non-compact manifolds}
\begin{proposition}\label{prop-noncompact-obstruction}
\textit{
Let $3\leq n\leq7$, $2\leq k\leq n-1$, and $m=n-k-1$. Let
$(M^n,g)$ be a complete, connected, non-compact spin manifold. Suppose that there is a smooth proper map of nonzero degree
\[
F=(A_{\mathbb{S}},A_{\mathbb{T}},b):M\longrightarrow\mathbb{S}^k\times\mathbb{T}^m\times\mathbb{R}
\]
such that $A_{\mathbb{S}}$ is 1-Lipschitz. Then there is no smooth function $\psi$ on $M$ such that 
\[
     -2\Delta_M\psi-|\nabla_M\psi|^2+R_M-k(k-1)>0.
     \]
     }
\end{proposition}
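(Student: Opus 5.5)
Argue by contradiction: assume a smooth $\psi$ exists with
\[
E\coloneqq -2\Delta_M\psi-|\nabla_M\psi|^2+R_M-k(k-1)>0,
\]
which is a continuous positive function on $M$. The plan is to produce a closed hypersurface $\Sigma^{n-1}$ carrying a weighted strict inequality, and then contradict Proposition \ref{prop-closed-obstruction} in dimension $n-1$ with target $\mathbb{S}^k\times\mathbb{T}^m$ (note $k\le n-1$ and $m=(n-1)-k$).

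\textbf{Step 1 (band).} Choose a regular value $t_0$ of $b$ and set $S_0=b^{-1}(t_0)$. By Lemma \ref{lem-degree-slice}, $S_0$ is closed and $\deg\bigl((A_{\mathbb{S}},A_{\mathbb{T}})|_{S_0}\bigr)=\deg F\neq0$. Use the signed distance $\rho$ to $S_0$ and Lemma \ref{lem-proper-fcn} to get a proper $\phi$ with $\phi^{-1}(0)=S_0$ and $|\nabla\phi|<1$. Apply Lemma \ref{lem-h} with this $E$ (perhaps $E/2$, to leave room) to obtain $a$ and $h$. The compact band is $\phi^{-1}([-a,a])$, with $\partial_\mp=\phi^{-1}(\mp a)$. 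The weight is $e^\psi$, the potential is $h\circ\phi$ (which tends to $\pm\infty$ appropriately), and $\Omega_0=\{\phi\le0\}$. Lemma \ref{lem-existence} then gives a smooth minimizer $\Omega$. Its boundary $\Sigma=\partial\Omega\setminus\partial_-$ is homologous in the band to $S_0$, since $\Omega\triangle\Omega_0$ is compactly contained in the interior. The same Stokes argument as in Lemma \ref{lem-degree-slice} (integrating $\omega=(A_{\mathbb{S}},A_{\mathbb{T}})^*\Theta$, which is closed) therefore shows that the sum of the degrees of $(A_{\mathbb{S}},A_{\mathbb{T}})$ over the components of $\Sigma$ equals $\deg F\neq0$. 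Hence some component $\Sigma_1$ has nonzero degree. On $\Sigma_1$, $A_{\mathbb{S}}|_{\Sigma_1}$ is still $1$-Lipschitz for the induced metric, and $\Sigma_1$ is spin because it is an orientable hypersurface in a spin manifold.

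\textbf{Step 2 (stability).} Lemma \ref{second-variation} holds for variations supported on $\Sigma_1$. Using $-\langle\nabla h(\phi),\nu\rangle\ge-|\nabla (h\circ\phi)|$ and dropping $|A|^2$, it gives
\[
\int_{\Sigma_1}\Bigl(-\Delta\varphi-\langle\nabla\psi,\nabla\varphi\rangle+\tfrac12(R_{\Sigma}-2\Delta_\Sigma\psi-|\nabla_\Sigma\psi|^2-k(k-1))-\tfrac12E'\Bigr)\varphi e^\psi\ge0,
\]
where $E'=E+h^2-2|\nabla(h\circ\phi)|>0$ by Lemma \ref{lem-h}, applied with $E/2$ after rescaling. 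Thus the operator $L=-\Delta-\langle\nabla\psi,\nabla\cdot\rangle+\tfrac12(R_\Sigma-2\Delta_\Sigma\psi-|\nabla_\Sigma\psi|^2-k(k-1)-E')$ is self-adjoint for $e^\psi d\mathrm{vol}$ and has nonnegative first eigenvalue. Take a positive first eigenfunction $u$ and set $\tilde\psi=\psi|_{\Sigma_1}+2\log u$. A standard computation, the warped analogue of the Schoen--Yau conformal trick, gives
\[
-2\Delta\tilde\psi-|\nabla\tilde\psi|^2+R_{\Sigma_1}-k(k-1)\ \ge\ E'+2\lambda_1>0 .
\]
The cross terms cancel exactly because the weight $e^\psi$ makes $L$ symmetric, and the weighted quantity $-2\Delta\psi-|\nabla\psi|^2$ is adapted to the substitution $\psi\mapsto\psi+2\log u$. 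This contradicts Proposition \ref{prop-closed-obstruction} applied to $(\Sigma_1,\tilde\psi)$ with $f=(A_{\mathbb{S}},A_{\mathbb{T}})|_{\Sigma_1}$, valid since $2\le n-1\le 6$ and $k\le n-1$. This finishes the contradiction.

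\textbf{Main obstacle.} I expect the main difficulty to be the algebra in Step 2: checking that $\tilde\psi=\psi+2\log u$ converts the weighted stability operator exactly into the weighted scalar quantity with a strictly positive remainder. The crucial term to handle there is $-\langle\nabla\psi,\nabla u\rangle/u$ combined with $|\nabla u|^2/u^2$. A secondary point is making sure the degree passes to the $\mu$-bubble component, which I would handle by the homology/Stokes argument above.
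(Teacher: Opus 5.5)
Your architecture (regular slice $S_0=b^{-1}(t_0)$, proper height $\phi$, Lemma \ref{lem-h} applied to $E/2$, warped $\mu$-bubble in $\phi^{-1}([-a,a])$, a component $\Sigma_1$ of nonzero degree, first eigenfunction of the weighted stability operator, contradiction with Proposition \ref{prop-closed-obstruction}) is exactly the paper's. However, the step you flagged as the main obstacle fails as written: the substitution $\tilde\psi=\psi|_{\Sigma_1}+2\log u$ does not give the claimed inequality. Put $w=\log u$ and $Q_0=R_{\Sigma}-2\Delta_\Sigma\psi-|\nabla_\Sigma\psi|^2-k(k-1)$. Dividing $Lu=\lambda_1u$ by $u$ gives $-\Delta w-|\nabla w|^2-\langle\nabla\psi,\nabla w\rangle+\frac12Q_0=\lambda_1+\frac12E'$. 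For $\tilde\psi=\psi+cw$ this yields
\[
R_{\Sigma}-2\Delta\tilde\psi-|\nabla\tilde\psi|^2-k(k-1)=(1-c)\,Q_0+c\,(2\lambda_1+E')+(2c-c^2)\,|\nabla w|^2 .
\]
For $c=2$ the right side is $2(2\lambda_1+E')-Q_0$, and nothing bounds $Q_0$ from above by $\lambda_1$ and $E'$. Already for $\psi\equiv0$, $Q_0=R_\Sigma-k(k-1)$ can exceed $2(2\lambda_1+E')$ at points where $R_\Sigma$ is well above its mean, since $\lambda_1$ is at most the mean of the potential. So positivity does not follow. The factor $2$ is the right one for an operator whose potential is $\frac14(R-k(k-1))$, as in the Kazdan step of Section \ref{sec-scalar-ricci}. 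It is not right for the stability operator, whose potential carries $\frac12R_\Sigma$.

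The correct choice is $c=1$, i.e.\ $\tilde\psi=\psi+\log u=\log(ue^{\psi})$, which is the paper's $\log\rho$ with $\rho=ue^\psi$. The cross term $-\langle\nabla\psi,\nabla\log u\rangle$ is absorbed into $-\frac12|\nabla\log\rho|^2$, leaving $-\frac12|\nabla\log u|^2\le0$. One then gets $R_\Sigma-2\Delta\log\rho-|\nabla\log\rho|^2-k(k-1)=2\lambda_1+E'+|\nabla\log u|^2\ge E'>0$; the paper additionally keeps $|A_\Sigma|^2$ on the right. After this correction your contradiction with Proposition \ref{prop-closed-obstruction} goes through unchanged. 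A minor slip: to bound the potential from above you need $-\langle\nabla(h\circ\phi),\nu\rangle\le|\nabla(h\circ\phi)|$, not the $\ge$ inequality you cite. Your displayed stability inequality does use the correct direction.
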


\begin{proof}
Suppose to the contrary that there is a smooth function $\psi$ on $M$ such that 
\begin{equation}\label{E-ineq}
E(\psi)=-\Delta_M\psi-\frac{1}{2}|\nabla_M\psi|^2+\frac{1}{2}\left(R_M-k(k-1)\right)>0.
\end{equation} 
Write $A=(A_{\mathbb{S}},A_{\mathbb{T}})$ and $X=\mathbb{S}^k\times\mathbb{T}^m$. By
Lemma \ref{lem-degree-slice}, there is a compact regular level set
$S_0=b^{-1}(t_0)$ satisfies
\[
\deg(A|_{S_0})=\deg F\neq0.
\]
 The Lemma \ref{lem-proper-fcn} shows that there is a surjective proper function \[
\phi:M\longrightarrow\mathbb{R}
\] with $S_0=\phi^{-1}(0)$. Apply
Lemma \ref{lem-h} to $E(\psi)$ and $\phi$, and let 
\[
W=\phi^{-1}([-a,a]),\qquad
\widehat h=h\circ\phi,\qquad
\Omega_0=W\cap\{\phi<0\}.
\]
It is clear that $W$ is a compact smooth band because $\phi$ is proper and $\pm a$ are
regular values. Moreover,
\[
\lim_{t\to\pm a}h(t)=\mp\infty
\]
and
 \begin{equation}\label{h-ineq}
E(\psi)+\frac12\widehat h^2-|\nabla_M\widehat h|>0
~~\text{on}\,\,\overset{\circ}{W}.
\end{equation}
Using Lemma \ref{lem-existence}, we can find a warped $\mu$-bubble $\Omega$ minimizing
\[\mathcal A_{\widehat h}(\Omega) = \int_{\partial^{\ast} \Omega}e^\psi d\mathcal{H}^{n-1} - \int_{W} (\chi_{\Omega}
		- \chi_{{\Omega_0}})\widehat he^\psi\,d\mathcal{H}^{n}.\]
Since $n\le7$, $\partial\Omega\setminus\phi^{-1}(-a)$ is a smooth closed 2-sided hypersurface and it is homologous to $S_0$. We take $\Sigma$ to be the connected component of $\partial\Omega\setminus\phi^{-1}(-a)$ such that the map 
 \[
 A|_\Sigma:\Sigma\longrightarrow X
 \]
has non-zero degree. It is clear that $\mathrm{pr}_{\mathbb{S}^k}\circ A|_\Sigma$ is 1-Lipschitz. $\Sigma$ is spin since it is 2-sided.

By Lemma \ref{second-variation}, the stability operator   
		\begin{equation}
        \begin{split}  
		L_{\psi}=&-\Delta_\Sigma-\langle\nabla_\Sigma\psi\,,\nabla_\Sigma\,\cdot\,\rangle+\frac{1}{2}\left(R_\Sigma-2\Delta_\Sigma\psi-|\nabla_\Sigma\psi|^2\right)\\
        &-\frac{1}{2}\left(R_M-2\Delta_M\psi-|\nabla_M\psi|^2\right)-\frac{1}{2}|A_\Sigma|^2-\langle\nabla_M\widehat h\,,\nu\rangle-\frac{1}{2}\widehat h^2
        \end{split}
		\end{equation}
is non-negative and is self-adjoint with respect to the weighted $L^2$-inner product, i.e.,
\[
\int_{\Sigma}\chi_1\, L_\psi\chi_2\,e^{\psi}=\int_{\Sigma}\chi_2\, L_\psi\chi_1\,e^{\psi},\quad \chi_1,\chi_2\in C^{\infty}(\Sigma).
\]
So, the first eigenvalue given by 
\[
\lambda(L_\psi)=\inf_{0\neq\chi\in C^{\infty}(\Sigma)}\frac{\int_{\Sigma}\chi\,L_\psi\chi\,e^{\psi}}{\int_{\Sigma}\chi^2\,e^{\psi}}
\]
is non-negative. Take the first eigenfunction $u>0$, we have
\begin{align*}         
0\le\frac{L_\psi u}{u}=&-\Delta_\Sigma\mathrm{log}u- 
               |\nabla_\Sigma\mathrm{log}u|^2-\langle\nabla_\Sigma\psi,\nabla_\Sigma \mathrm{log}u\rangle+\frac{1}{2}\left(R_\Sigma-2\Delta_\Sigma\psi-|\nabla_\Sigma\psi|^2\right)\\
        &-\frac{1}{2}\left(R_M-2\Delta_M\psi-|\nabla_M\psi|^2\right)-\frac{1}{2}|A_\Sigma|^2-\langle\nabla_M\widehat h\,,\nu\rangle-\frac{1}{2}\widehat h^2.
\end{align*}
Define $\rho=ue^{\psi}$, from the above inequality we obtain
\begin{equation}
    \begin{split}
        0\le&-\Delta_\Sigma\mathrm{log}\rho-\frac{1}{2}|\nabla_\Sigma\mathrm{log}\rho|^2-\frac{1}{2}|\nabla_\Sigma\mathrm{log}u|^2+\frac{1}{2}R_\Sigma\\
        &-\frac{1}{2}\left(R_M-2\Delta_M\psi-|\nabla_M\psi|^2\right)-\frac{1}{2}|A_\Sigma|^2-\langle\nabla_M\widehat h\,,\nu\rangle-\frac{1}{2}\widehat h^2\\
        \le&\,\frac{1}{2}\left(R_\Sigma-2\Delta_\Sigma\mathrm{log}\rho-|\nabla_\Sigma\mathrm{log}\rho|^2\right)-\frac{1}{2}\left(R_M-2\Delta_M\psi-|\nabla_M\psi|^2\right)\\
        &-\frac{1}{2}|A_\Sigma|^2+|\nabla_M\widehat h|-\frac{1}{2}\widehat h^2.
    \end{split}
\end{equation}
It follows from \eqref{E-ineq} and \eqref{h-ineq} that 
\[
R_\Sigma-2\Delta_\Sigma\mathrm{log}\rho-|\nabla_\Sigma\mathrm{log}\rho|^2-k(k-1)\ge|A_\Sigma|^2+2\left(E(\psi)+\frac{1}{2}\widehat h^2-|\nabla_M\widehat h|\right)>0.
\]
So far, we have constructed a closed, connected spin manifold $\Sigma$ and a smooth map 
\[
A|_\Sigma:\Sigma\longrightarrow\mathbb{S}^{k}\times\mathbb{T}^{m}
\]
with non-zero degree, such that $\operatorname{pr}_{\mathbb{S}^k}\circ A|_\Sigma:
(\Sigma,g_\Sigma)\to(\mathbb{S}^k,g_{\mathbb{S}^k})$
is $1$-Lipschitz. Moreover, there exists a smooth function $\rho$ on $\Sigma$ such that 
\[
R_\Sigma-2\Delta_\Sigma\mathrm{log}\rho-|\nabla_\Sigma\mathrm{log}\rho|^2>k(k-1).
\]
This contradicts Proposition \ref{prop-closed-obstruction}.
\end{proof}

\section{Scalar curvature equality and non-negative Ricci curvature}\label{sec-scalar-ricci}
In this section, following a similar method to that in \cite{Zhu1}, we show that  $R_M\equiv k(k-1)$ and $\operatorname{Ric}_g\ge0$. We begin by recalling the following theorem, due to Kazdan \cite{Kazdan}, which Zhu used repeatedly in his proof \cite{Zhu1}.

\begin{theorem}(\cite[Theorem A]{Kazdan})\label{thm-kazdan}
\textit{
Let $(M,g)$ be a connected noncompact Riemannian manifold without
boundary, and let $L_g=-\Delta_g+P$ have smooth real potential $P$.
Suppose that a relatively compact connected smooth domain $D\Subset M$ has positive first Neumann eigenvalue for $L_g$, and that
$P\geq0$ on $M\setminus D$. Then there is a smooth function $u$ on $M$ such that $0<\mathrm{const}\le u\le\mathrm{const}$ and
\[
L_gu>0\quad\text{on}\,M.
\]
}
\end{theorem}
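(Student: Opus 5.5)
This result is quoted from Kazdan \cite{Kazdan}, and the paper uses it as a black box. If I had to reprove it, I would build $u$ by solving a Neumann problem on a slightly enlarged domain, then glue that solution to a controlled positive function outside. The hypothesis on $P$ enters only through the first Neumann eigenvalue and the sign of $P$ on $M\setminus D$.

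\emph{Step 1: enlarge $D$ while keeping the spectral positivity.} For any smooth relatively compact domain $\Omega\supset D$, write $Q_\Omega(v)=\int_\Omega(|\nabla v|^2+Pv^2)$. Since $P\ge0$ on $\Omega\setminus D$, we have $Q_\Omega(v)\ge Q_D(v)\ge\lambda_1^N(D)\int_D v^2$. If $Q_\Omega(v)=0$, then $v\equiv0$ on $D$ and $\nabla v\equiv0$ on $\Omega$. Since $\Omega$ is connected, this forces $v\equiv0$. By compactness of the Neumann spectrum, every connected smooth $\Omega\Supset D$ therefore has $\lambda_1^N(\Omega)>0$. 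Fix such an $\Omega$ containing a collar $\{0<\operatorname{dist}(\cdot,D)<2r_0\}$.

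\emph{Step 2: a positive strict supersolution on $\Omega$.} Because $\lambda_1^N(\Omega)>0$, the Neumann realization of $L_g$ on $\Omega$ is invertible. Its inverse is positivity preserving: a first Neumann eigenfunction $\varphi_1>0$ exists, and comparison against $\varphi_1$ gives a weak maximum principle. I would solve $L_g w=1$ in $\Omega$ with Neumann data $\partial_\nu w=\beta$ for a small function $\beta$ chosen for gluing. Positivity of the resolvent plus the strong maximum principle gives $w>0$ on $\overline\Omega$, so $w$ is bounded above and below by positive constants there.

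\emph{Step 3: the exterior and gluing.} On $M\setminus D$ the constant $1$ already satisfies $L_g1=P\ge0$. I would set $u=\chi w+(1-\chi)c$ with a cutoff $\chi$ supported in $\Omega$, and check that $L_gu>0$ on the transition collar. This inequality holds on the collar after rescaling $w$ against $c$ and using $P\ge0$ there. Away from $\Omega$, $u=c$ and $L_gu=cP$. The two-sided bounds $0<\mathrm{const}\le u\le\mathrm{const}$ then follow from the bounds on $w$ and from $u=c$ outside.

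\emph{Main obstacle.} Outside $\Omega$ one only gets $L_gu=cP\ge0$. Strict positivity fails on any region where $P$ vanishes. A bounded positive $u$ with $-\Delta u>0$ on such a region may not exist, for instance on a parabolic end. So I would first check whether the inequality can be weakened to $L_gu\ge0$ with strict positivity on $\Omega$, since that is presumably what Kazdan's Theorem A asserts. Alternatively, the inequality can be kept strict if the later applications in this paper supply $P>0$ outside a compact set. If neither holds, my fallback is to add a correction $u=c-\varepsilon v$. Here $v$ is a bounded solution of $-\Delta v\le-\eta$ with $\eta>0$ on the region where $P=0$, built from an exhaustion and Green's function estimates. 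Whether such a bounded $v$ exists depends on the geometry of the ends, which is exactly where I expect the difficulty to lie.
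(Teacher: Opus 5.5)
Steps 1 and 2 are fine as far as they go, but the proposal does not prove the statement. The gap is the one you flag and then set aside.

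\textbf{Strictness outside $\Omega$.} The theorem asserts $L_gu>0$ on all of $M$. Your $u$ equals $c$ off $\Omega$, so there $L_gu=cP$, which vanishes wherever $P$ does. Weakening the conclusion is not an option for this paper:
\begin{itemize}
\item In Proposition~\ref{prop-Ricci} the potential of $L_{g_t}$ is identically zero outside $D$.
\item The output is fed, via $\psi=2\log u$ (so that $L_gu/u=\frac14(R_M-2\Delta\psi-|\nabla\psi|^2-k(k-1))$), into Proposition~\ref{prop-noncompact-obstruction}, whose hypothesis is a strict inequality on all of $M$.
\end{itemize}
Your reason for doubting the strict version is also mistaken. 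Parabolicity excludes nonconstant positive superharmonic functions on all of $M$, not on the complement of a compact set. For example, $2-1/\log r$ on $\{r>e\}\subset\mathbb R^2$ takes values in $(1,2)$ and satisfies $-\Delta(2-1/\log r)=2r^{-2}(\log r)^{-3}>0$.

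On any noncompact connected $M$ one can build a bounded $W\ge0$ with $-\Delta W>0$ off a compact set $\overline{\Omega_0}\supset\overline D$. Take a convergent series $\sum_k\varepsilon_kG_k$ of Dirichlet Green potentials of $M\setminus\overline{\Omega_0}$, with positive densities on successive annuli of an exhaustion. Each $G_k$ is bounded by a maximum-principle argument, whatever the ends look like. Now suppose you had $v$ with $c_1\le v\le c_2$, $L_gv\ge0$ on $M$ and $L_gv>0$ near $\overline{\Omega_0}$. Then $u=v+\delta\eta W$ works for small $\delta$, where $\eta$ cuts $W$ off near $\overline{\Omega_0}$. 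This is because $P\ge0$ and $\eta W\ge0$ off $D$. Your fallback points this way but leaves it unproved.

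\textbf{The gluing in Step 3.} The function $v$ itself is also missing, because Step 3 does not establish even $L_gu\ge0$ on the collar. With $L_gw=1$,
\[
L_g\bigl(\chi w+(1-\chi)c\bigr)=\chi+(1-\chi)cP-2\langle\nabla\chi,\nabla w\rangle-(w-c)\Delta\chi ,
\]
and the last two terms have no sign. Near the outer edge of $\operatorname{supp}\chi$ the ratio $|\nabla\chi|/\chi$ is unbounded. So $\chi$ cannot absorb these terms without sign or smallness information on $w-c$ and $\nabla w$ along the collar, and Step 2 provides none. Multiplying $w$ and $c$ by the same factor just multiplies $L_gu$ by it. The single constant $c$ cannot make $w-c$ small where $w$ varies.

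Being exactly constant near $\partial\Omega$ is a genuine constraint, not a normalization. Let $\varphi_\gamma$ be the $L_g$-harmonic function on $\Omega$ with boundary values $\gamma\ge0$, $\gamma\not\equiv0$. It is positive in $\Omega$, since $\lambda_1^D(\Omega)\ge\lambda_1^N(\Omega)>0$. Suppose $L_gu\ge0$, $L_gu\not\equiv0$, and $u\equiv c$ near $\partial\Omega$. Green's formula and the symmetry of the Dirichlet-to-Neumann map give
$c\int_{\partial\Omega}\gamma\,\partial_\nu\varphi_1=\int_\Omega\varphi_\gamma L_gu>0$.
Hence $\partial_\nu\varphi_1\ge0$ must hold pointwise on $\partial\Omega$. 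Neumann positivity only yields the integrated statement $\int_{\partial\Omega}\partial_\nu\varphi_1=Q_\Omega(\varphi_1)>0$. Turning the Neumann hypothesis into a positive supersolution with controlled behaviour at infinity is the real content of the theorem. Choosing $\beta$ small and rescaling do not supply it.
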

We now assume the hypotheses of Theorem \ref{main}. The weighted obstruction forces the scalar curvature inequality to hold with equality.
\begin{proposition}\label{prop-scalar}
\textit{
Under the hypotheses of Theorem \ref{main},
\[
R_M(g)\equiv k(k-1).
\]
}
\end{proposition}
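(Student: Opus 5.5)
We argue by contradiction and combine Kazdan's supersolution theorem with the noncompact weighted obstruction, Proposition \ref{prop-noncompact-obstruction}. Suppose $R_M(g)\not\equiv k(k-1)$. Since $R_M\ge k(k-1)$, there is a point $p$ with $R_M(p)>k(k-1)$, and hence a small geodesic ball $D\Subset M$ around $p$ on which $R_M-k(k-1)>0$. Consider the operator
\[
L_g=-\Delta_g+P,\qquad P=\tfrac{1}{2}\bigl(R_M-k(k-1)\bigr)\ge 0 \ \text{on } M .
\]

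First we check the hypotheses of Theorem \ref{thm-kazdan}. The domain $D$ is relatively compact, connected and smooth, and $P\ge0$ on $M\setminus D$. For the first Neumann eigenvalue, let $v$ be a first Neumann eigenfunction on $D$. Then
\[
\lambda_1=\frac{\int_D |\nabla v|^2+Pv^2}{\int_D v^2}.
\]
Since $P>0$ on $\overline D$ (shrink $D$ if necessary), this quotient is strictly positive. If $\lambda_1=0$, then $\nabla v=0$, so $v$ is a nonzero constant, and then $\int_D Pv^2>0$ gives a contradiction. Theorem \ref{thm-kazdan} therefore gives a smooth $u$ with $0<c_1\le u\le c_2$ and
\[
-\Delta u+\tfrac12\bigl(R_M-k(k-1)\bigr)u>0 .
\]

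Next we convert $u$ into a weight. Put $\psi=2\log u$, so that $u=e^{\psi/2}$. Then
\[
\frac{\Delta u}{u}=\tfrac12\Delta\psi+\tfrac14|\nabla\psi|^2,
\]
and dividing the strict inequality by $u>0$ and multiplying by $2$ gives
\[
-\Delta\psi-\tfrac12|\nabla\psi|^2+R_M-k(k-1)>0 .
\]
This is not yet the required form, which has $-2\Delta\psi-|\nabla\psi|^2$. Choosing instead $\psi=\log u$ gives
\[
\frac{\Delta u}{u}=\Delta\psi+|\nabla\psi|^2,
\]
and the Kazdan inequality becomes
\[
-\Delta\psi-|\nabla\psi|^2+\tfrac12\bigl(R_M-k(k-1)\bigr)>0 .
\]
Multiplying by $2$ yields
\[
-2\Delta\psi-2|\nabla\psi|^2+R_M-k(k-1)>0,
\]
and since $-|\nabla\psi|^2\ge-2|\nabla\psi|^2$, it follows that
\[
-2\Delta_M\psi-|\nabla_M\psi|^2+R_M-k(k-1)>0 .
\]
Thus $\psi=\log u$ is a smooth function on $M$ satisfying the strict weighted inequality.

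Finally, the map $F$ from Theorem \ref{main} satisfies exactly the hypotheses of Proposition \ref{prop-noncompact-obstruction}: it is smooth and proper of nonzero degree, and its spherical component is $1$-Lipschitz. That proposition forbids the existence of such a $\psi$, which is the desired contradiction, so $R_M\equiv k(k-1)$.

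The only delicate point is the strict positivity of the first Neumann eigenvalue, which needs $P>0$ on all of $\overline D$; choosing $D$ as a small ball where $R_M>k(k-1)$ handles this. Everything else is direct bookkeeping once Kazdan's theorem and Proposition \ref{prop-noncompact-obstruction} are available.
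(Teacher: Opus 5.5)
Your proof is correct and is essentially the paper's argument: Kazdan's theorem applied to a Schr\"odinger operator whose potential is a positive multiple of $R_M-k(k-1)$, then a logarithmic substitution, then Proposition \ref{prop-noncompact-obstruction}. The differences are cosmetic. The paper uses the potential $\frac14(R_M-k(k-1))$ with $\psi=2\log u$, which gives $-2\Delta\psi-|\nabla\psi|^2+R_M-k(k-1)>0$ exactly, whereas your potential $\frac12(R_M-k(k-1))$ with $\psi=\log u$ reaches the same inequality after discarding an extra $-|\nabla\psi|^2$. Also, your claim that $P>0$ is needed on all of $\overline D$ is overcautious: your own constant-eigenfunction argument only uses $P\ge0$ with $P\not\equiv0$ on $D$.
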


\begin{proof}
Consider the Schr\"odinger operator
\[
L_g=-\Delta_g+\frac14\left(R_M(g)-k(k-1)\right).
\]
Its potential $P=\frac14(R_M(g)-k(k-1))$ is nonnegative. If it is not identically zero, it must be positive at some point $q$ in $M$. Choose a relatively compact connected smooth domain $D$ which contains $q$. It is clear that the first Neumann eigenvalue of $L_g$ given by 
\[\mu(D,L_g)=\inf_{0\neq\chi\in C^{\infty}(D)}\frac{\int_{D}|\nabla_g\chi|^2+P\chi^2d\mu_g}{\int_{D}\chi^2d\mu_g}
\]
is positive. By Theorem \ref{thm-kazdan}, there exists a smooth function $u$ on $M$ with $0<\mathrm{const}\le u\le\mathrm{const}$ such that $L_gu>0$. Set $\psi=2\mathrm{log}u$, it follows that
\begin{align*}
0<&-\frac{\Delta_gu}{u}+\frac{1}{4}(R_M(g)-k(k-1))\\
=&\frac{1}{4}\left(R_M(g)-2\Delta_g\psi-|\nabla_g\psi|^2-k(k-1)\right)
\end{align*}
This contradicts Proposition \ref{prop-noncompact-obstruction}. Therefore $P\equiv0$.
\end{proof}

The next perturbation enlarges the metric along a direction in which the Ricci curvature is negative. This preserves the spherical 1-Lipschitz condition and yields precisely the local spectral positivity needed for Kazdan's theorem.

\begin{proposition}\label{prop-Ricci}
\textit{
Under the hypotheses of Theorem \ref{main},
\[
\operatorname{Ric}_g\geq0.
\]
}
\end{proposition}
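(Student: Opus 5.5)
We argue by contradiction, following Zhu's deformation argument. Suppose $\operatorname{Ric}_g(v,v)<0$ for some unit vector $v\in T_qM$. Choose a compactly supported symmetric $2$-tensor $\eta$, supported in a small ball $B$ around $q$, with $\eta\ge0$ everywhere and $\eta_q(v,v)>0$. A natural choice is $\eta=\zeta\,\omega\otimes\omega$, where $\omega$ is a smooth $1$-form extending $v^\flat$ near $q$ and $\zeta\ge0$ is a cutoff equal to one near $q$. We arrange $\langle\operatorname{Ric}_g,\eta\rangle_g<0$ on $\operatorname{supp}\eta$; this holds because $\operatorname{Ric}_g(v,v)<0$ is an open condition, so it persists if we shrink $B$. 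Set $g_t=g+t\eta$ for small $t>0$. Then $g_t\ge g$, so the identity $(M,g_t)\to(M,g)$ is $1$-Lipschitz and $A_{\mathbb S}$ stays $1$-Lipschitz for $g_t$. Moreover $g_t=g$ outside a compact set, so $(M,g_t)$ is complete and $F$ is still proper of the same degree. Hence all hypotheses of Proposition \ref{prop-noncompact-obstruction} hold for $g_t$.

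Next we compute the first variation of scalar curvature. By Proposition \ref{prop-scalar}, $R_M(g)\equiv k(k-1)$, and
\[
\frac{d}{dt}\Big|_{t=0}R(g_t)=-\Delta_g(\operatorname{tr}_g\eta)+\operatorname{div}_g\operatorname{div}_g\eta-\langle\operatorname{Ric}_g,\eta\rangle_g .
\]
The last term is $\ge0$ and strictly positive near $q$. The divergence terms have no sign, but they are exact divergences with compact support.

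Consider the operator $L_t=-\Delta_{g_t}+\tfrac14\bigl(R(g_t)-k(k-1)\bigr)$ on a fixed relatively compact domain $D\supset\operatorname{supp}\eta$. Outside $D$ its potential is $0\ge0$. The key step is to show that the first Neumann eigenvalue $\mu_t=\mu(D,L_t)$ is positive for small $t>0$.

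At $t=0$ the potential vanishes on $D$, so $\mu_0=0$ is simple with constant eigenfunction. First-order perturbation theory gives
\[
\mu'(0)=\frac{1}{4\,\mathrm{vol}_g(D)}\int_D\frac{d}{dt}\Big|_{t=0}R(g_t)\,d\mu_g .
\]
Here the derivative of $-\Delta_{g_t}$ contributes nothing when tested against constants, and the terms from the varying volume form also vanish because $\mu_0=0$ and the eigenfunction is constant. Since $\eta$ is supported in the interior of $D$, the divergence terms integrate to zero. Therefore
\[
\mu'(0)=-\frac{1}{4\,\mathrm{vol}_g(D)}\int_D\langle\operatorname{Ric}_g,\eta\rangle\,d\mu_g>0,
\]
and so $\mu_t>0$ for small $t>0$.

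Theorem \ref{thm-kazdan} then produces a positive function $u$, bounded above and below by positive constants, with $L_tu>0$. Exactly as in Proposition \ref{prop-scalar}, $\psi=2\log u$ satisfies $-2\Delta_{g_t}\psi-|\nabla\psi|^2+R(g_t)-k(k-1)>0$, which contradicts Proposition \ref{prop-noncompact-obstruction} for $(M,g_t)$. Hence $\operatorname{Ric}_g\ge0$.

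The main obstacle is the eigenvalue step: justifying that $\mu_t$ is differentiable (simple eigenvalue, analytic dependence on $t$) and that only the $-\langle\operatorname{Ric},\eta\rangle$ term survives in $\mu'(0)$. If differentiability is delicate, the fallback is a direct Rayleigh-quotient argument. Test $L_t$ against $\chi=1+t w$ and argue by contradiction: suppose $\mu_t\le0$ along a sequence $t\to0$, normalize the corresponding eigenfunctions, and show they converge to constants. Expanding the quotient to first order in $t$ then contradicts the strict sign of $\int_D\langle\operatorname{Ric},\eta\rangle$.
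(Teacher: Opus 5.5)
Your proposal is correct and follows essentially the same route as the paper: enlarge the metric by a compactly supported nonnegative tensor along a negative Ricci direction (so $A_{\mathbb S}$ stays $1$-Lipschitz), show by first-order perturbation that the first Neumann eigenvalue of $-\Delta_{g_t}+\frac14(R(g_t)-k(k-1))$ on $D$ has $\mu'(0)>0$ because the divergence terms integrate away, and then combine Kazdan's theorem with $\psi=2\log u$ to contradict Proposition \ref{prop-noncompact-obstruction} for $g_t$. The only difference is cosmetic: the paper perturbs by $-2t\eta\operatorname{Ric}_g(V,V)\,\omega\otimes\omega$ rather than your $t\zeta\,\omega\otimes\omega$, and these differ by a positive factor on the support.
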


\begin{proof}
By Proposition \ref{prop-scalar}, $R_g=k(k-1)$. Assume otherwise $\operatorname{Ric}_g$ is negative along some tangent vector at a point $q$. Then we can find a unit vector $v\in T_qM$ with respect to $g$ such that $\operatorname{Ric}_g(v)=cv$ with $c$ to be a negative constant. Extend $v$ to a unit vector field $V$ on some neighborhood of $q$ and denote $\omega$ to be the corresponding dual 1-form. From continuity $\operatorname{Ric}_g(V,V )$ takes negative values around $q$. Let $B=\operatorname{Ric}_g(V,V)\omega\otimes \omega$. It is clear that
\[
\langle B,\operatorname{Ric}_g\rangle_g(q)=c^2>0.
\]
Therefore we can assume that $\langle B, \mathrm{Ric}_g\rangle_g$ is positive on a neighborhood of $q$. Take a nonnegative cut-off function $\eta$ supporting on this neighborhood and define 
\[
g_t = g - 2t\eta B.
\] 
Since $B$ is negative semi-definite, for sufficiently small $t>0$, $g_t\ge g$ and $g_t$ is complete. Write $A_\mathbb{S}=\mathrm{pr}_{\mathbb{S}^k}\circ F$, for any tangent vector $Z$,
\[
|dA_\mathbb{S}(Z)|_{g_{\mathbb{S}^k}}\leq|Z|_g\leq|Z|_{g_t}.
\]
Thus $A_\mathbb{S}$ remains $1$-Lipschitz with respect to $g_t$.

Choose a relatively compact connected smooth domain $D$ containing
$\mathrm{supp}\,\eta$ in its interior. Let $\mu(t)$ be the first Neumann
eigenvalue on $D$ of
\[
L_{g_t}=-\Delta_{g_t}+\frac14\left(R_M(g_t)-k(k-1)\right).
\]
At $t=0$, $L_{g_0}=-\Delta_g$, so $\mu(0)=0$. This simple eigenvalue varies differentiably with $t$. Taking the derivative of the scalar curvature function with respect to $g_t$, from \cite[Theorem 1.174]{Bese} we obtain
\[
\left.\frac{\partial}{\partial t}\right|_{t=0} R_M(g_t)= 2\Delta_g \operatorname{tr}_g (\eta B)-2\operatorname{div}_g \operatorname{div}_g (\eta B)+2\eta\langle B, \operatorname{Ric}_g\rangle_g.
\]
With a similar calculation as in the proof of \cite[Theorem 1.7]{LiChao} we conclude that
\begin{align*}
\mu'(0)
&=\frac{1}{4\mathrm{Vol}_g(D)}\int_D\left.\frac{\partial}{\partial t}\right|_{t=0} R_M(g_t)\,d\mu_g\\
&=\frac{1}{2\mathrm{Vol}_g(D)}\int_D\eta\,\langle B,\operatorname{Ric}_g\rangle_g\,d\mu_g>0
\end{align*}
Fix a small $t>0$ for which $\mu(t)>0$. Outside $D$, the metric is unchanged and $R_{g_t}=k(k-1)$. Thus the potential of $L_{g_t}$ is zero there. By Theorem \ref{thm-kazdan}, there is a smooth function $u$ with $L_{g_t}u>0$ on $M$. Once more, set $\psi=2\mathrm{log}u$, it follows that
\begin{align*}
0<&-\frac{\Delta_{g_t}u}{u}+\frac{1}{4}(R_M({g_t})-k(k-1))\\
=&\frac{1}{4}\left(R_M({g_t})-2\Delta_{g_t}\psi-|\nabla_{g_t}\psi|^2-k(k-1)\right).
\end{align*}
The spin structure, properness, and degree of $F$ are unchanged, and
$A_\mathbb{S}$ is $1$-Lipschitz for the complete metric $g_t$.
Applying Proposition \ref{prop-noncompact-obstruction} to the new metric $g_t$ gives a contradiction. This completes the proof.

\end{proof}

\section{Proof of main theorem}\label{sec-main}
In this section, we apply the Cheeger-Gromoll splitting theorem \cite{C-G} to prove Theorem \ref{main}. 

We first show that $M$ has at least two ends.
\begin{lemma}\label{lem-two-ends}
\textit{
Under the hypotheses of Theorem \ref{main}, the manifold $M$ has at least two ends.}
\end{lemma}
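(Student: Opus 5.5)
We will use the real component $b=\operatorname{pr}_{\mathbb{R}}\circ F$. By Lemma \ref{lem-degree-slice}, $b$ is proper and surjective. Fix a regular value $t_0$ and let $S_0=b^{-1}(t_0)$, a compact hypersurface. Choose $T>0$ large so that $K=b^{-1}([t_0-T,t_0+T])$ is a compact set containing $S_0$. The goal is to show that $M\setminus K$ has at least two unbounded connected components. Equivalently, the two noncompact open sets $U_+=b^{-1}((t_0+T,\infty))$ and $U_-=b^{-1}((-\infty,t_0-T))$ cannot be joined by a path in $M\setminus K$, and each is nonempty with noncompact closure.

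The first step is separation. Any path from $U_+$ to $U_-$ moves $b$ continuously from a value above $t_0+T$ to one below $t_0-T$. By the intermediate value theorem it therefore passes through $b^{-1}([t_0-T,t_0+T])=K$. So $M\setminus K=U_+\sqcup U_-$ is a disjoint union of open sets, and every component of $M\setminus K$ lies in exactly one of them.

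The second step is that each side contains an unbounded component. Since $b$ is surjective, $U_\pm$ are nonempty. Suppose every component of $U_+$ were relatively compact in $M$. Then I would derive a contradiction with properness plus surjectivity of $b$ onto $(t_0+T,\infty)$, as follows. Enlarge $K$ by taking $T$ larger along a sequence of regular values. The set $b^{-1}([t_0+T,t_0+T'])$ is compact for every $T'$, yet $b$ takes arbitrarily large values. A compact subset $C\subset M$ has $b(C)$ bounded, so $U_+$ is not relatively compact.

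This alone does not rule out $U_+$ being a union of infinitely many relatively compact components. To handle that, use the degree. For a regular value $t_1>t_0+T$, Lemma \ref{lem-degree-slice} gives $\deg(A|_{b^{-1}(t_1)})=\deg F\neq0$. The components of $b^{-1}(t_1)$ with nonzero total degree therefore lie in the compact set $b^{-1}(t_1)$, which meets only finitely many components of $U_+$. Cleaner still is the standard ends count. Choose $K$ to be a compact domain with smooth boundary, for instance $K=b^{-1}([t_0-T,t_0+T])$ with $t_0\pm T$ regular values. Then $M\setminus K$ has finitely many components whose closures meet $\partial K$, and every component of $M\setminus K$ meets $\partial K$ because $M$ is connected. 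Hence finitely many components cover $U_+$, and since $U_+$ is not relatively compact, at least one of them is unbounded. The same holds for $U_-$.

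Combining the two steps, $M\setminus K$ has an unbounded component in $U_+$ and another in $U_-$, and these are distinct. So $M$ has at least two ends. The main subtle point is the finiteness of components meeting $\partial K$. I will justify it from compactness of the smooth boundary $\partial K=b^{-1}(t_0\pm T)$, which has finitely many components, together with the connectedness of $M$.
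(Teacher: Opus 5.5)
Your proof is correct and rests on the same mechanism as the paper's: outside a compact band of levels, the proper surjective function $b$ separates $\{b\gg0\}$ from $\{b\ll0\}$. The difference is that you argue directly with $K=b^{-1}([t_0-T,t_0+T])$, which obliges you to show that $M\setminus K$ has only finitely many components; your collar argument at the regular levels $t_0\pm T$ does this, once ``every component of $M\setminus K$ meets $\partial K$'' is read as ``every component has closure meeting $\partial K$''. The paper instead argues by contradiction and absorbs exactly this finiteness into the standard fact that a one-ended manifold admits a compact $K\supset b^{-1}([-1,1])$ with connected complement. The paragraph invoking $\deg(A|_{b^{-1}(t_1)})$ contributes nothing and can be deleted, since only properness and surjectivity of $b$ are used.
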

\begin{proof}
By Lemma \ref{lem-degree-slice}, the function $b:M\to\mathbb{R}$ is proper and
surjective. Suppose to the contrary that $M$ has only one end. Since
$b^{-1}([-1,1])$ is compact, we may choose a compact set $K\subset M$
such that $b^{-1}([-1,1])\subset K$ and $M\setminus K$ is connected.

Since $K$ is compact, there exists $T>1$ such that
$|b(x)|<T$ for every $x\in K$. The surjectivity of $b$ provides points $x_-,x_+\in M\setminus K$ satisfying $b(x_-)=-T$ and $b(x_+)=T$. The set $M\setminus K$ is an open connected subset of a manifold and is therefore path-connected. Hence there exists a continuous curve
\[
c:[0,1]\longrightarrow M\setminus K
\]
joining $x_-$ to $x_+$. By the intermediate value theorem, there is
a $s_0\in(0,1)$ such that $b(c(s_0))=0$. It follows that
\[
c(s_0)\in b^{-1}(0)\subset K,
\]
which leads to a contradiction. Hence $M$ has at least two ends.
\end{proof}

By Proposition \ref{prop-Ricci}, Lemma \ref{lem-two-ends}, and the
Cheeger--Gromoll splitting theorem \cite{C-G}, there is an isometry
\[
(M,g)\cong(N^{n-1}\times\mathbb{R},g_N+ds^2)
\]
with $N$ compact and connected.

The slice $N$ inherits a spin structure and satisfies
\[
\operatorname{Ric}_{g_N}\geq0,\qquad R_{g_N}=k(k-1).
\]
It remains to retain the topological information on this particular
slice. 
\begin{lemma}\label{lem-split-degree}
\textit{
Under the hypotheses of Theorem \ref{main}, set $X=\mathbb{S}^k\times\mathbb{T}^m$. Regard $F=(A,b)$ as a map from $N\times\mathbb{R}$ to $X\times\mathbb{R}$ through
the splitting isometry. Then for every $s\in\mathbb{R}$ the map
\[
A_s=A|_{N\times\{s\}}:N\longrightarrow X
\]
has the same non-zero degree. Its spherical component is 1-Lipschitz.}
\end{lemma}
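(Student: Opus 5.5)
We will argue with compactly supported forms, in the same way as the proof of Lemma \ref{lem-degree-slice}, but now slicing along the splitting coordinate $s$ of $N\times\mathbb{R}$ rather than along the level sets of $b$. Fix orientations so that the splitting isometry $M\cong N\times\mathbb{R}$ is orientation preserving, with $N$ carrying the induced orientation. Choose a top form $\Theta$ on $X$ with $\int_X\Theta=1$, and a compactly supported $\zeta$ on $\mathbb{R}$ with $\int_{\mathbb{R}}\zeta\,dt=1$. Then $\Theta\wedge\zeta(t)\,dt$ is compactly supported on $X\times\mathbb{R}$ with total integral one. Since $F$ is proper of non-zero degree,
\[
\deg F=\int_{N\times\mathbb{R}}A^*\Theta\wedge\zeta(b)\,db .
\]

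The first step shows that $s\mapsto\deg(A_s)=\int_{N\times\{s\}}A^*\Theta$ is constant. The form $\omega=A^*\Theta$ is closed, and any two slices $N\times\{s_1\}$ and $N\times\{s_2\}$ bound the compact region $N\times[s_1,s_2]$. Stokes' theorem then gives equality of the two integrals. Alternatively, the inclusions $\iota_s:N\to N\times\{s\}$ are homotopic, so the maps $A_s=A\circ\iota_s$ are homotopic and have the same degree. Call this common value $d$.

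The second step identifies $d$ with $\deg F$. I would compare $\deg F$ with the degree of the map $(A,s)\colon N\times\mathbb{R}\to X\times\mathbb{R}$. One way is to deform the real component through the linear homotopy $b_\tau=(1-\tau)b+\tau s$ and check that $(A,b_\tau)$ stays proper. That check is exactly the obstacle, since it requires $|b-s|$ to be controlled, and that control is not known a priori.

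I would therefore avoid the homotopy and instead use the cohomological pairing directly. The form $\omega\wedge\zeta(b)\,db$ is compactly supported, and its class in $H^n_c(N\times\mathbb{R})\cong\mathbb{R}$ equals $\deg F$ times the generator. I would write $\zeta(b)\,db=d(Z\circ b)$ with $Z(t)=\int_{-\infty}^t\zeta$. Then $Z\circ b$ tends to $0$ or $1$ on the two ends, because $b$ is proper and, by Lemma \ref{lem-two-ends} together with connectedness of $b^{-1}(t)$-regions, tends to $\pm\infty$ on them. Taking the cutoff $\chi(s)$ with $\chi\to0$ at $-\infty$ and $\chi\to1$ at $+\infty$, the difference $Z\circ b-\chi(s)$ is compactly supported. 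Hence $\omega\wedge d(Z\circ b)$ and $\omega\wedge\chi'(s)\,ds$ differ by the exact compactly supported form $d\bigl(\omega(Z\circ b-\chi)\bigr)$ (up to sign). Fubini then gives
\[
\int\omega\wedge\chi'(s)\,ds=\int_{\mathbb{R}}\chi'(s)\,d\,ds=d .
\]
The key point to verify is that $b\to+\infty$ on one end and $b\to-\infty$ on the other. Here I would use that $b$ is proper, so on each end $|b|\to\infty$. Connectedness of each end of $N\times\mathbb{R}$ (namely $N\times(R,\infty)$) fixes the sign there. If both ends had the same sign, $Z\circ b$ would be compactly supported mod constants, and the integral would be $0$, contradicting $\deg F\neq0$. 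Hence the signs are opposite, and possibly replacing $s$ by $-s$ yields $d=\pm\deg F\neq0$.

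Finally, the spherical component of $A_s$ is $\operatorname{pr}_{\mathbb{S}^k}\circ F$ restricted to the totally geodesic, isometrically embedded slice $N\times\{s\}$. Since the slice carries the induced metric, a restriction of a 1-Lipschitz map to it is 1-Lipschitz.
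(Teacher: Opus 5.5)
Your argument is correct, but it takes a different route from the paper's; the difference is which factor carries the exactness.

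The paper argues by contradiction. If $\deg(A_s)=0$, then $[A_s^*\Theta]=0$ in $H^{n-1}(N;\mathbb{R})$. Since the inclusion $i_s\colon N\to N\times\mathbb{R}$ is a homotopy equivalence, it follows that $A^*\Theta=d\beta$ for some form $\beta$ on $N\times\mathbb{R}$, not necessarily compactly supported. Because $\alpha=\zeta(b)\,db$ is closed and compactly supported (properness of $b$), $\beta\wedge\alpha$ has compact support, and Stokes gives $\deg F=0$. The paper never needs to know how $b$ behaves at infinity.

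You instead keep $\omega=A^*\Theta$ fixed and move the exactness onto the $1$-form. You replace $d(Z\circ b)$ by $\chi'(s)\,ds$ modulo $d$ of a compactly supported form, and then integrate slice by slice. This needs one extra input: $b\to\pm\infty$ uniformly, with opposite signs, on the two ends $N\times(R,\infty)$ and $N\times(-\infty,-R)$. You derive it correctly from three facts: properness of $b$ forces $|b|>T$ on $N\times\{|s|>R_T\}$; connectedness of $N\times(R_T,\infty)$ fixes the sign on each end; and equal signs would make the pairing vanish, contradicting $\deg F\neq0$. Your earlier phrase about connectedness of $b^{-1}(t)$-regions and the appeal to Lemma \ref{lem-two-ends} are unnecessary here, since the splitting already exhibits exactly these two ends.

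The paper's route is shorter and purely cohomological, but it only yields $\deg(A_s)\neq0$. Yours gives the sharper identity $\deg(A_s)=\pm\deg F$, with the sign fixed by the orientation of $s$ relative to $b$. As a by-product, the same end analysis shows that the linear homotopy $(A,(1-\tau)b+\tau s)$ you first set aside is in fact proper.
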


\begin{proof}
The maps $A_s$ are homotopic, so their degrees are a common integer.
Let $\Theta$ be a top form on $X$ such that $\int_{X}\Theta=1$, so $\mathrm{deg}(A_s)=\int_{N}A_s^*\Theta$. Consider the including map
\[
i_s:N\longrightarrow N\times\mathbb{R},\quad i_s(x)=(x,s).
\]
It is clear that $i_s$ is a homotopy equivalence, so
\[
i_s^*:H^{n-1}(N\times\mathbb{R};\mathbb{R})\longrightarrow H^{n-1}(N;\mathbb{R})
\]
is an isomorphism. On the other hand, since $A_s=A\circ i_s$, $i_s^*[A^*\Theta]=[A_s^*\Theta]$. 

If $\mathrm{deg}(A_s)=0$, then 
$[A_s^*\Theta]=0$, consequently, $0=[A^*\Theta]\in H^{n-1}(N\times\mathbb{R};\mathbb{R})$. There exists a $(n-2)$-form $\beta$ on $N\times\mathbb{R}$ with $d\beta=A^*\Theta$. Choose $\zeta\in C_c^\infty(\mathbb{R})$ with integral one. The 1-form $\alpha=\zeta(b)\,db$ is closed and compactly supported because $b$ is proper. Hence $\beta\wedge\alpha$ is compactly supported, and
\[
\deg F=\int_{N\times\mathbb{R}}A^*\Theta\wedge\alpha
=\int_{N\times\mathbb{R}}d(\beta\wedge\alpha)=0,
\]
a contradiction. So $\mathrm{deg}(A_s)\neq0$. The spherical Lipschitz property follows by
restriction to the isometrically embedded slice.
\end{proof}

\begin{proof}[Proof of Theorem \ref{main}]
    We have proved that $(M,g)$ is isometric to $(N^{n-1}\times\mathbb{R},g_N+ds^2)$ and $N$ is a closed, connected spin manifold with $R_N=k(k-1)$. Lemma \ref{lem-split-degree} provides a smooth map
\[
A_0:N\longrightarrow\mathbb{S}^k\times\mathbb{T}^m
\]
with non-zero degree such that $\mathrm{pr}_{\mathbb{S}^k}\circ A_0$ is 1-Lipschitz. By Theorem \ref{sup} and Theorem \ref{Weighted-Llarull}, $(M,g)$ is isometric to
\[
\left(
\mathbb{S}^k\times\mathbb{T}_\Lambda^m\times\mathbb{R},\,
g_{\mathbb{S}^k}+g_{\mathrm{flat}}+dt^2
\right),
\]
where $\Lambda\subset\mathbb{R}^m$ is a full lattice and
$\mathbb{T}_\Lambda^m=\mathbb{R}^m/\Lambda$ carries the induced
Euclidean metric $g_{\mathrm{flat}}$.
\end{proof}

\appendix
\section{Removal of the spin assumption}
\label{app-remove-spin}

\theoremstyle{plain}
\newtheorem{nsproposition}{Proposition}[section]
\numberwithin{equation}{section}

Write
\[
 \mathcal R_g(\psi)=R_g-2\Delta_g\psi-|\nabla_g\psi|^2.
\]
All manifolds below are connected and oriented. The common conformal calculation is as follows: for a metric $g$ on a closed $k$ manifold, a smooth map $p$ to a unit round sphere $\mathbb{S}^k$, let $l_g=\|dp\|_{g}$ be the  pointwise operator norm of $dp$ with respect to $g$ and $g_{\mathbb{S}^k}$, set
$\widehat g=e^{2\psi/(k-1)}g$. Then, for any constant $C$,
\begin{equation}\label{ns:eq:conformal-gap}
\begin{split}
 l_{\widehat g}^2&=e^{-2\psi/(k-1)}l_g^2,\\
 R_{\widehat g}-C l_{\widehat g}^2
 &=e^{-2\psi/(k-1)}
 \left(\mathcal R_g(\psi)-C l_g^2
       +\frac{|\nabla_g\psi|^2}{k-1}\right).
\end{split}
\end{equation}

\begin{nsproposition}[Weighted spherical rigidity]
\label{ns:prop:weighted-sphere}
Let $(N^k,g)$ be closed, $k\geq2$, and let $p:N\to\mathbb S^k$ be smooth
of nonzero degree. If
\[
 \mathcal R_g(\psi)\geq k(k-1)\|dp\|_{g}^2
\]
for some smooth $\psi$, then $\psi$ is constant and $p$ is an isometry
after a constant rescaling of $g$. In particular, if $p$ is
$1$-Lipschitz and $\mathcal R_g(\psi)\geq k(k-1)$, then
$p$ is an isometry.
\end{nsproposition}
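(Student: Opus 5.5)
The plan is to reduce to the unweighted, non-spin Llarull rigidity by the conformal change in \eqref{ns:eq:conformal-gap}. This requires the oriented (non-spin) form of Llarull's theorem, and I take it from \cite[Theorem~1.2]{WWXZ2026}. The form I want is the following: if $(N^k,\widehat g)$ is closed and oriented and $p$ has nonzero degree with $R_{\widehat g}\ge k(k-1)\|dp\|_{\widehat g}^2$, then $p$ is a homothety onto $\mathbb S^k$, meaning $\|dp\|_{\widehat g}$ is a constant and $p$ is an isometry after rescaling.

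\textbf{Step 1: the case $k\ge3$.} Set $\widehat g=e^{2\psi/(k-1)}g$ and apply \eqref{ns:eq:conformal-gap} with $C=k(k-1)$. This gives
\[
R_{\widehat g}-k(k-1)\,l_{\widehat g}^2=e^{-2\psi/(k-1)}\Bigl(\mathcal R_g(\psi)-k(k-1)l_g^2+\tfrac{|\nabla_g\psi|^2}{k-1}\Bigr)\ge e^{-2\psi/(k-1)}\tfrac{|\nabla_g\psi|^2}{k-1}\ge0.
\]
So $\widehat g$ satisfies the hypothesis of the unweighted oriented rigidity theorem, and $p$ is a homothety for $\widehat g$. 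In the equality case $R_{\widehat g}=k(k-1)l_{\widehat g}^2$ pointwise, because both sides equal $k(k-1)\lambda^2$ for the homothety constant $\lambda$. The displayed inequality then forces $\nabla_g\psi\equiv0$, so $\psi$ is constant. Hence $\widehat g$ is a constant multiple of $g$, and $p$ is an isometry after a constant rescaling of $g$.

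\textbf{Step 2: the case $k=2$.} Here I would integrate directly. Over a closed surface $\int\Delta\psi=0$, so the hypothesis gives
\[
\int_N R_g\ \ge\ \int_N\bigl(2\|dp\|_g^2+|\nabla\psi|^2\bigr)\ \ge\ 2\int_N|\det dp|+\int_N|\nabla\psi|^2 .
\]
The second inequality uses $\|dp\|^2\ge|\det dp|$ in dimension two. The left side is $4\pi\chi(N)\le 8\pi$ by Gauss--Bonnet. The middle term is at least $2\cdot4\pi|\deg p|\ge 8\pi$. So every inequality is an equality. This gives $\nabla\psi=0$, and $\|dp\|^2=|\det dp|$ forces $dp$ to be conformal with equal singular values. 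I expect it also gives $R_g=2\|dp\|^2$ pointwise; the first inequality was integrated, so this needs checking, but it follows once $\psi$ is constant and the hypothesis is used pointwise. This makes $p$ a conformal branched covering of degree $\pm1$, because $\chi(N)=2$. So $p$ is a conformal diffeomorphism with $g=\lambda^2p^*g_{\mathbb S^2}$. Then $R_g=2\lambda^2$ on one side, and the curvature formula for a conformal metric gives $R_g=2\lambda^{-2}(1-\Delta_{p^*g_{\mathbb S^2}}\log\lambda)$. A maximum principle argument at the extrema of $\lambda$ then gives $\lambda$ constant.

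\textbf{Step 3: the $1$-Lipschitz case.} If $p$ is $1$-Lipschitz then $\|dp\|_g\le1$, so $\mathcal R_g(\psi)\ge k(k-1)\ge k(k-1)\|dp\|_g^2$, and Steps 1--2 apply. The rescaling constant $\lambda$ satisfies $\lambda\le1$ by the Lipschitz bound. Also $R_g=k(k-1)\lambda^2\ge k(k-1)$, because $\mathcal R_g(\psi)=R_g$ once $\psi$ is constant. Together these give $\lambda=1$, so $p$ is an isometry.

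The main obstacle is the input theorem itself. I need the oriented Llarull rigidity in the distance-free form $R\ge k(k-1)\|dp\|^2$, including the equality case, and I need to check that \cite{WWXZ2026} supplies it in all dimensions $k\ge3$ without spin.
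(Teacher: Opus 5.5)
Your Steps 1 and 3 are the paper's argument. The paper also passes to $\widehat g=e^{2\psi/(k-1)}g$ and uses \eqref{ns:eq:conformal-gap} with $C=k(k-1)$. It applies \cite[Theorem~1.2]{WWXZ2026} to get a homothety, hence $R_{\widehat g}=k(k-1)l_{\widehat g}^2$ and $\nabla\psi\equiv0$. In the $1$-Lipschitz case it concludes from $k(k-1)\le k(k-1)l_g^2\le k(k-1)$. The distance-free form of the oriented Llarull theorem that you flag as the main obstacle is exactly the input the paper relies on.

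The only real difference is that you split off $k=2$, and the method does not force this. The identity \eqref{ns:eq:conformal-gap} holds verbatim for $k=2$: there $\widehat g=e^{2\psi}g$ and $R_{\widehat g}=e^{-2\psi}(R_g-2\Delta_g\psi)$. The paper therefore treats all $k\ge2$ uniformly with one citation. What your integrated Gauss--Bonnet argument buys is independence from that citation in dimension two, and it is correct in outline.

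Equality throughout your chain gives $\nabla\psi\equiv0$, $\chi(N)=2$, $|\deg p|=1$ and $\|dp\|^2=|\det dp|$. It also gives a constant sign of $\det dp$, coming from $\int|\det dp|=|\int\det dp|$, so $p$ is holomorphic or antiholomorphic of degree one and hence a conformal diffeomorphism. Note that $|\deg p|=1$ comes from this equality, not from $\chi(N)=2$ as you wrote.

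There is one slip to fix. If $g=\lambda^2p^*g_{\mathbb S^2}$, then $\|dp\|_g=\lambda^{-1}$, so the pointwise identity reads $R_g=2\lambda^{-2}$, not $2\lambda^2$. With the correct value, comparison with $R_g=2\lambda^{-2}(1-\Delta_{p^*g_{\mathbb S^2}}\log\lambda)$ gives $\Delta_{p^*g_{\mathbb S^2}}\log\lambda=0$. Then $\log\lambda$ is harmonic on a closed surface, hence constant. As literally written you would get $\lambda^4=1-\Delta\log\lambda$ instead. Evaluating that at the extrema only gives $\lambda_{\min}\le1\le\lambda_{\max}$, so it does not close the argument.

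Also keep one convention for $\lambda$. In Steps 1 and 3 your $\lambda$ is the dilation $\|dp\|$, which is the reciprocal of the $\lambda$ in Step 2.
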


\begin{proof}
Use \eqref{ns:eq:conformal-gap} with $C=k(k-1)$.
Wang-Wang-Xie-Zhu \cite[Theorem~1.2]{WWXZ2026}, applied to
$\widehat g$, show that $p$ is a constant homothety. Thus
$R_{\widehat g}=k(k-1) l_{\widehat g}^2$, and both nonnegative terms in
parentheses in \eqref{ns:eq:conformal-gap} vanish. Hence $\psi$ is
constant and $\mathcal R_g(\psi)=k(k-1) l_g^2$. Under the additional
hypotheses, $k(k-1)\leq k(k-1) l_g^2\leq k(k-1)$, so the homothety factor is one.
\end{proof}

\begin{nsproposition}[Weighted sphere-torus rigidity]
\label{ns:prop:weighted-product}
Let $3\leq n\leq7$, $2\leq k\leq n-1$, and $r=n-k$. Suppose that
$(N^n,g)$ is closed and that
$A=(A_\mathbb S,A_\mathbb T):N\to \mathbb S^k\times\mathbb T^r$ is smooth of nonzero degree, with
$A_\mathbb S$ $1$-Lipschitz. If a smooth function $\psi$ satisfies
\[
 \mathcal R_g(\psi)\geq k(k-1),
\]
then $\psi$ is constant and $(N,g)$ is isometrically covered by
$(\mathbb S^k\times\mathbb R^r,g_{\mathbb S^k}+g_{\mathbb{R}^{r}})$.
Moreover, all three conclusions of Proposition \ref{Chow2} hold.
\end{nsproposition}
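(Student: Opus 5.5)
Our plan is to follow the scheme of Proposition \ref{ns:prop:weighted-sphere}: remove the weight $\psi$ by a conformal change, and only then invoke an oriented (non-spin) sphere--torus rigidity statement. The proof runs through a slicing argument as in \cite[Theorem 1.11]{Zhou-Zhu}. We intend to show first that $\psi$ is constant and that equality holds everywhere, and then to run the slicing/equality analysis with $\psi\equiv\mathrm{const}$.

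\textbf{Step 1: strictness is impossible.} Suppose $\mathcal R_g(\psi)>k(k-1)$ somewhere, or that $\psi$ is not constant. We reduce the dimension by torus slicing. Pull back the $r$ circle coordinates of $A_\mathbb T$. Since $n\le 7$, we can minimize iteratively weighted area, or warped $\mu$-bubbles, in the homology classes Poincar\'e dual to these $1$-forms. This produces a chain $N=\Sigma_n\supset\Sigma_{n-1}\supset\dots\supset\Sigma_k$. Each slice carries a weight $\psi_j=\psi+\sum\log u_i$, built from first eigenfunctions exactly as in the proof of Proposition \ref{prop-noncompact-obstruction}.

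At each stage the stability inequality gives
\[
\mathcal R_{\Sigma_j}(\psi_j)\ge \mathcal R_{\Sigma_{j+1}}(\psi_{j+1})+|A|^2+(\text{gradient terms})\ge k(k-1).
\]
The bottom slice $\Sigma_k$ carries $p=A_\mathbb S|_{\Sigma_k}$, which has nonzero degree because $\deg A\neq0$ and the slices are dual to the torus classes. The map $p$ is $1$-Lipschitz, being a restriction.

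Proposition \ref{ns:prop:weighted-sphere} then forces equality all along the chain. So every slice is totally geodesic, every $u_i$ is constant, and $\psi$ is constant. In particular $R_N=k(k-1)$ holds pointwise at the top level. Bottom-slice rigidity also yields conclusions (2) and (3) of Proposition \ref{Chow2}: $\Sigma_k$ is a totally geodesic unit round sphere, and $A_\mathbb S|_{\Sigma_k}$ is an isometry.

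\textbf{Step 2: Ricci nonnegativity and the covering.} With $\psi$ constant we have $R_N\ge k(k-1)$. The equality case of the slicing argument gives totally geodesic slices on which the normal Ricci term vanishes. Then we perturb exactly as in Proposition \ref{prop-Ricci}: a metric enlargement along a negative Ricci direction keeps $A_\mathbb S$ $1$-Lipschitz. On a closed manifold, this enlargement combined with a first-eigenfunction conformal weight gives $\mathcal R(\psi)>k(k-1)$, which contradicts Step 1. Hence $\operatorname{Ric}_N\ge0$.

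Since $b_1(N)\ge r$ by Lemma \ref{inj}, the Cheeger--Gromoll splitting of the universal cover yields $\widetilde N=Y\times\mathbb R^r$ with $Y$ compact and simply connected. The sphere from Step 1 lifts to a factor slice, so $Y=\mathbb S^k$, and (1) of Proposition \ref{Chow2} follows.

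\textbf{Main obstacle.} The main difficulty is Step 1 without spin. One must check that the conformal identity \eqref{ns:eq:conformal-gap} lets \cite[Theorem~1.2]{WWXZ2026} be applied to a weighted metric on each slice. One must also check that the degree of the bottom slice is nonzero when the minimizers are disconnected; for this we would pick components as in Proposition \ref{prop-noncompact-obstruction}. I would first try doing the conformal change only on the final $k$-dimensional slice, so that the non-spin input is used only in dimension $k$.
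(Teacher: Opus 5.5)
\textbf{Gap: Step 1 does not show that $\psi$ is constant or that $R_N\equiv k(k-1)$ on all of $N$.}

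The problem is the sentence ``Proposition \ref{ns:prop:weighted-sphere} then forces equality all along the chain.'' Each stability inequality $\mathcal R_{\Sigma_j}(\psi_j)\ge\mathcal R_{\Sigma_{j+1}}(\psi_{j+1})+|A|^2+\cdots$ holds pointwise on $\Sigma_j$. Rigidity of the bottom slice therefore only tells you that the nonnegative gap terms vanish at points of $\Sigma_k$.

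That is enough for conclusions (2) and (3): all intermediate second fundamental forms vanish along $\Sigma_k$, so $\Sigma_k$ is totally geodesic in $N$. It says nothing about the $u_i$, $\nabla\psi$ or $R_N$ away from $\Sigma_k$.

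Strictness of $\mathcal R_g(\psi)>k(k-1)$ at a point off $\Sigma_k$ is not even detected. That part is fixable by the first-eigenfunction promotion you already use in Step 2, which turns strict-somewhere into strict-everywhere.

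Constancy of $\psi$ is the real obstruction. It cannot come from any strictness argument inside the weighted framework. The hypothesis $\mathcal R_g(\psi)\ge k(k-1)$ says exactly that $e^{\psi/2}$ is a positive supersolution of $-\Delta_g+\frac14(R_g-k(k-1))$. If $\mathcal R_g(\psi)\equiv k(k-1)$, then $e^{\psi/2}$ is a first eigenfunction with eigenvalue $0$, whether or not $\psi$ is constant. So a nonconstant $\psi$ creates no spectral slack. Doing the conformal change only on the bottom $k$-dimensional slice cannot see $\nabla\psi$ off $\Sigma_k$.

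Your Step 2 needs a constant first eigenfunction at $t=0$, i.e.\ $R_g\equiv k(k-1)$ and $\psi$ constant on all of $N$. Otherwise the eigenvalue derivative picks up $e^{\psi}$-weighted divergence terms instead of $\int\eta\langle B,\operatorname{Ric}\rangle$. So conclusion (1) is not reached as written.

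\textbf{The missing idea: conformal change on all of $N^n$.} The paper sets $\widehat g=e^{2\psi/(n-1)}g$ on the whole $n$-manifold. Since the $1$-Lipschitz condition is not conformally invariant, it is replaced by the length-dilation condition. The conformal identity (in dimension $n$) then gives
\[
R_{\widehat g}-k(k-1)\|dA_{\mathbb S}\|_{\widehat g}^2=e^{-2\psi/(n-1)}\Bigl(\mathcal R_g(\psi)-k(k-1)\|dA_{\mathbb S}\|_g^2+\tfrac{1}{n-1}|\nabla\psi|^2\Bigr)\ge0,
\]
so the weight becomes a genuine pointwise gain.

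The paper then runs the slicing, torical symmetrization and equality argument of \cite{Zhou-Zhu} on $(N,\widehat g)$, with Proposition \ref{ns:prop:weighted-sphere} as the bottom input. This gives $\widetilde N\cong(\mathbb S^k\times\mathbb R^r,a\,g_{\mathbb S^k}+g_{\mathbb R^r})$. A fiberwise argument then shows $R_{\widehat g}=k(k-1)\|dA_{\mathbb S}\|_{\widehat g}^2$ everywhere, which forces $\nabla\psi\equiv0$.

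\textbf{How your plan can be repaired.} Use the same global conformal change, with no foliation needed. Suppose $\psi$ is nonconstant, or $\mathcal R_g(\psi)>k(k-1)$ somewhere. Then the continuous potential $R_{\widehat g}-k(k-1)\|dA_{\mathbb S}\|^2_{\widehat g}\ge0$ is positive somewhere. A first-eigenfunction weight, after smoothing the potential from below, makes the weighted length-dilation inequality strict everywhere. Weighted slicing does not increase dilation, so it yields a strict inequality on the bottom slice. This contradicts Proposition \ref{ns:prop:weighted-sphere}.

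Hence $\psi$ is constant and $R_g\equiv k(k-1)$. With that in hand, your Step 2 (Ricci deformation on the closed $N$, Cheeger--Gromoll, and lifting the totally geodesic sphere into a slice) is a legitimate route to (1), different from the paper's.
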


\begin{proof}
Keep the underlying manifold $N$ and the map $A$
fixed. We first consider an auxiliary smooth metric $h$ on this same
manifold $N$; later we will take $h=\widehat g=e^{2\psi/(n-1)}g$.
The unweighted comparison below assumes
$R_h\geq k(k-1)\|dA_\mathbb S\|_{h}^2$; the original condition
$\|dA_\mathbb S\|_{g}\leq1$ will be used when we return to $g$.
The proof of \cite[Theorem 1.11, Section 4]{Zhou-Zhu} applies to
$(N,h)$ under the assumption
\begin{equation}\label{ns:eq:ordinary-product}
 R_h\geq k(k-1) l_h^2,\qquad l_h=\|dA_\mathbb S\|_{h}.
\end{equation}
Here is the replacement of its spin input. For a closed bottom
slice $\Sigma^k\subset N$, let $h_\Sigma=h|_\Sigma$ be its induced
metric. On $\Sigma\times\mathbb{T}^s$, consider the warped metric
\[
 \bar h=h_\Sigma+\sum_{i=1}^s e^{2\eta_i}d\theta_i^2,
 \qquad \eta=\sum_{i=1}^s\eta_i,
\]
one has
\[
 R_{\bar h}=\mathcal R_{h_\Sigma}(\eta)
                 -\sum_{i=1}^s|\nabla\eta_i|^2.
\]
Consequently, Proposition~\ref{ns:prop:weighted-sphere} gives the
length version of \cite[Theorem 1.9]{Zhou-Zhu} without spin:
if $R_{\bar h}\geq k(k-1)\|dp\|_{h_\Sigma}^2$ for a
nonzero degree map $p:\Sigma\to \mathbb S^k$, then $p$ is a homothety and
all $\eta_i$ are constant. In particular, strict inequality is
impossible.

This replaces uses of the stable slice comparison in the proof of
\cite[Theorem 1.11]{Zhou-Zhu}. Restriction to a slice cannot increase the length dilation of $A_\mathbb S$.
Thus the same slicing, torical symmetrization and equality argument
give, for some $a>0$,
\begin{equation}\label{ns:eq:ordinary-cover}
 (\widetilde N,\widetilde h)
 \cong(\mathbb S^k\times\mathbb R^r,a\,g_{\mathbb S^k}+g_{\mathbb R^r}),
\end{equation}
and a closed embedded totally geodesic bottom sphere
$\Sigma\subset N$ on which $A_\mathbb S$ is a homothety. The nested slices
are embedded, and equality makes their second fundamental forms
vanish; hence the bottom sphere is totally geodesic in $N$.
These steps use orientation and $n\leq7$, but not spin.

We repeat the fiberwise rigidity argument from the proof of
Theorem \ref{sup}, which is independent of the spin assumption.
By \eqref{ns:eq:ordinary-cover}, $R_h=k(k-1)/a$ and $l_h^2\leq a^{-1}$.
The closed totally geodesic bottom sphere lifts to a spherical fiber, since its Euclidean coordinate functions are harmonic and hence constant. Thus the restriction of the lifted $A_\mathbb S$ to each spherical fiber has
nonzero degree by homotopy invariance.
Proposition~\ref{ns:prop:weighted-sphere}, applied with zero weight
to each fiber $(\mathbb{S}^k,a\,g_{\mathbb{S}^k})$, shows that its restricted dilation equals $a^{-1/2}$.
Since restriction to a fiber cannot increase the operator norm,
we obtain $l_h^2\geq a^{-1}$ everywhere.
Together with the previously established bound $l_h^2\leq a^{-1}$,
this yields $l_h^2=a^{-1}$ and hence $R_h=k(k-1)l_h^2$ everywhere.

Now apply this unweighted comparison to
$h=\widehat g=e^{2\psi/(n-1)}g$. The hypotheses give
$\mathcal R_g(\psi)-k(k-1) l_g^2\geq0$, so
\eqref{ns:eq:conformal-gap} applies and its left-hand side vanishes.
It follows that $\psi$ is constant and
\[
 k(k-1)\leq\mathcal R_g(\psi)=k(k-1) l_g^2\leq k(k-1).
\]
Thus $R_g=k(k-1)$. Since $\psi$ is constant, $g$ differs from $\widehat g$
by a positive constant factor. We conclude that $(N,g)$ is isometrically covered by
$(\mathbb S^k\times\mathbb R^r,g_{\mathbb S^k}+g_{\mathbb{R}^{r}})$.
The covering and bottom sphere above then give precisely
(1)--(3) of Proposition \ref{Chow2}.
\end{proof}

The two propositions establish Theorem \ref{Weighted-Llarull} and Theorem \ref{Chow} with ``spin''replaced by ``oriented''. Proposition \ref{Chow2} is included in the second
proof, so the deck group argument of Section \ref{sec-sup} gives the same global product conclusion. In Sections \ref{w-o}-\ref{sec-main}, spin is used only to invoke these closed rigidity statements on two-sided hypersurfaces; such hypersurfaces inherit an orientation from the ambient oriented manifold. The weighted $\mu$-bubble, deformation, splitting and degree arguments therefore apply unchanged. This proves Theorem \ref{main} under orientability, with all its other hypotheses and its global isometry conclusion unchanged. The case with no torus factor uses Proposition~\ref{ns:prop:weighted-sphere} directly.

\bibliographystyle{amsplain}
	\bibliography{mybib.bib}

@article{WWXZ2026,
      title={Scalar-mean rigidity theorem and Llarull's theorem for nonspin manifolds}, 
      author={Gaoming Wang and Jinmin Wang and Zhizhang Xie and Bo Zhu},
      journal={arXiv:2609.15907},
      year={2026}, 
}

@article{chow2025,
  title={Scalar curvature rigidity for products of spheres and tori},
  author={Chow, Tsz-Kiu Aaron},
  journal={arXiv:2511.04407},
  year={2025},
}

@book {HyperMR1299730,
    AUTHOR = {Ratcliffe, John G.},
     TITLE = {Foundations of hyperbolic manifolds},
    SERIES = {Graduate Texts in Mathematics},
    VOLUME = {149},
 PUBLISHER = {Springer-Verlag, New York},
      YEAR = {1994},
     PAGES = {xii+747},
      ISBN = {0-387-94348-X},
   MRCLASS = {57M50 (20H10 30F40 51M10)},
  MRNUMBER = {1299730},
MRREVIEWER = {Colin\ C.\ Adams},
       DOI = {10.1007/978-1-4757-4013-4},
       URL = {https://doi.org/10.1007/978-1-4757-4013-4},
}

@article {Cho-Li,
    AUTHOR = {Chodosh, Otis and Li, Chao},
     TITLE = {Generalized soap bubbles and the topology of manifolds with
              positive scalar curvature},
   JOURNAL = {Ann. of Math. (2)},
  FJOURNAL = {Annals of Mathematics. Second Series},
    VOLUME = {199},
      YEAR = {2024},
    NUMBER = {2},
     PAGES = {707--740},
      ISSN = {0003-486X,1939-8980},
   MRCLASS = {53C21 (53A10)},
  MRNUMBER = {4713021},
MRREVIEWER = {Alberto\ Roncoroni},
       DOI = {10.4007/annals.2024.199.2.3},
       URL = {https://doi.org/10.4007/annals.2024.199.2.3},
}

@article {Zhu1,
    AUTHOR = {Zhu, Jintian},
     TITLE = {Rigidity results for complete manifolds with nonnegative
              scalar curvature},
   JOURNAL = {J. Differential Geom.},
  FJOURNAL = {Journal of Differential Geometry},
    VOLUME = {125},
      YEAR = {2023},
    NUMBER = {3},
     PAGES = {623--644},
      ISSN = {0022-040X,1945-743X},
   MRCLASS = {53C24 (53C21)},
  MRNUMBER = {4674077},
MRREVIEWER = {Almir\ Silva Santos},
       DOI = {10.4310/jdg/1701804153},
       URL = {https://doi.org/10.4310/jdg/1701804153},
}

@article {Kazdan,
    AUTHOR = {Kazdan, Jerry L.},
     TITLE = {Deformation to positive scalar curvature on complete
              manifolds},
   JOURNAL = {Math. Ann.},
  FJOURNAL = {Mathematische Annalen},
    VOLUME = {261},
      YEAR = {1982},
    NUMBER = {2},
     PAGES = {227--234},
      ISSN = {0025-5831,1432-1807},
   MRCLASS = {53C20 (58G30)},
  MRNUMBER = {675736},
MRREVIEWER = {V.\ I.\ Oliker},
       DOI = {10.1007/BF01456220},
       URL = {https://doi.org/10.1007/BF01456220},
}

@book {Bese,
    AUTHOR = {Besse, Arthur L.},
     TITLE = {Einstein manifolds},
    SERIES = {Ergebnisse der Mathematik und ihrer Grenzgebiete (3) [Results
              in Mathematics and Related Areas (3)]},
    VOLUME = {10},
 PUBLISHER = {Springer-Verlag, Berlin},
      YEAR = {1987},
     PAGES = {xii+510},
      ISBN = {3-540-15279-2},
   MRCLASS = {53C25 (53-02 53C21 53C30 53C55 58D17 58E11)},
  MRNUMBER = {867684},
MRREVIEWER = {S.\ M.\ Salamon},
       DOI = {10.1007/978-3-540-74311-8},
       URL = {https://doi.org/10.1007/978-3-540-74311-8},
}

@article {LiChao,
    AUTHOR = {Li, Chao and Mantoulidis, Christos},
     TITLE = {Positive scalar curvature with skeleton singularities},
   JOURNAL = {Math. Ann.},
  FJOURNAL = {Mathematische Annalen},
    VOLUME = {374},
      YEAR = {2019},
    NUMBER = {1-2},
     PAGES = {99--131},
      ISSN = {0025-5831,1432-1807},
   MRCLASS = {53C21 (53C20)},
  MRNUMBER = {3961306},
MRREVIEWER = {David\ J.\ Wraith},
       DOI = {10.1007/s00208-018-1753-1},
       URL = {https://doi.org/10.1007/s00208-018-1753-1},
}

@article {C-G,
    AUTHOR = {Cheeger, Jeff and Gromoll, Detlef},
     TITLE = {The splitting theorem for manifolds of nonnegative {R}icci
              curvature},
   JOURNAL = {J. Differential Geometry},
  FJOURNAL = {Journal of Differential Geometry},
    VOLUME = {6},
      YEAR = {1971/72},
     PAGES = {119--128},
      ISSN = {0022-040X,1945-743X},
   MRCLASS = {53C20},
  MRNUMBER = {303460},
MRREVIEWER = {J.\ R.\ Vanstone},
       URL = {http://projecteuclid.org/euclid.jdg/1214430220},
}

@article {Llarull,
    AUTHOR = {Llarull, Marcelo},
     TITLE = {Sharp estimates and the {D}irac operator},
   JOURNAL = {Math. Ann.},
  FJOURNAL = {Mathematische Annalen},
    VOLUME = {310},
      YEAR = {1998},
    NUMBER = {1},
     PAGES = {55--71},
      ISSN = {0025-5831,1432-1807},
   MRCLASS = {53C21 (58G10 58G25)},
  MRNUMBER = {1600027},
MRREVIEWER = {Uwe\ Semmelmann},
       DOI = {10.1007/s002080050136},
       URL = {https://doi.org/10.1007/s002080050136},
}

@article {HSS,
    AUTHOR = {Hao, Tianze and Shi, Yuguang and Sun, Yukai},
     TITLE = {Llarull type theorems on complete manifolds with positive
              scalar curvature},
   JOURNAL = {Trans. Amer. Math. Soc.},
  FJOURNAL = {Transactions of the American Mathematical Society},
    VOLUME = {377},
      YEAR = {2024},
    NUMBER = {10},
     PAGES = {7403--7420},
      ISSN = {0002-9947,1088-6850},
   MRCLASS = {53C21 (53C24)},
  MRNUMBER = {4855340},
MRREVIEWER = {Xiaodong\ Wang},
       DOI = {10.1090/tran/9249},
       URL = {https://doi.org/10.1090/tran/9249},
}

@article {Zhou-Zhu,
    AUTHOR = {Zhou, Linfeng and Zhu, Guangrui},
     TITLE = {A weighted {L}larull type theorem and its applications},
   JOURNAL = {Proc. Amer. Math. Soc.},
  FJOURNAL = {Proceedings of the American Mathematical Society},
    VOLUME = {154},
      YEAR = {2026},
    NUMBER = {10},
     PAGES = {4453--4467},
      ISSN = {0002-9939,1088-6826},
   MRCLASS = {53C24 (53C21 53C27)},
  MRNUMBER = {5105323},
       DOI = {10.1090/proc/17757},
       URL = {https://doi.org/10.1090/proc/17757},
}

@article {C-W,
    AUTHOR = {Chai, Xiaoxiang and Pyo, Juncheol and Wan, Xueyuan},
     TITLE = {Spectral constant rigidity of warped product metrics},
   JOURNAL = {J. Lond. Math. Soc. (2)},
  FJOURNAL = {Journal of the London Mathematical Society. Second Series},
    VOLUME = {110},
      YEAR = {2024},
    NUMBER = {1},
     PAGES = {Paper No. e12958, 35},
      ISSN = {0024-6107,1469-7750},
   MRCLASS = {53C24 (53C27 58C40)},
  MRNUMBER = {4767708},
       DOI = {10.1112/jlms.12958},
       URL = {https://doi.org/10.1112/jlms.12958},
}

@article {Deng,
    AUTHOR = {Deng, Jialong},
     TITLE = {Curvature-dimension condition meets {G}romov's {$n$}-volumic
              scalar curvature},
   JOURNAL = {SIGMA Symmetry Integrability Geom. Methods Appl.},
  FJOURNAL = {SIGMA. Symmetry, Integrability and Geometry. Methods and
              Applications},
    VOLUME = {17},
      YEAR = {2021},
     PAGES = {Paper No. 013, 20},
      ISSN = {1815-0659},
   MRCLASS = {53C23},
  MRNUMBER = {4210894},
       DOI = {10.3842/SIGMA.2021.013},
       URL = {https://doi.org/10.3842/SIGMA.2021.013},
}
\end{document}